\newcommand{\ve}{{\bf e}}
\begin{document}

\title{Eigenvalues and Singular Value Decomposition of Dual Complex Matrices}
\author{ Liqun Qi\footnote{%
    Department of Applied Mathematics, The Hong Kong Polytechnic University, Hung Hom,
    Kowloon, Hong Kong
    ({\tt maqilq@polyu.edu.hk}).}
    \and and \
    Ziyan Luo\footnote{Department of Mathematics,
  Beijing Jiaotong University, Beijing 100044, China. (zyluo@bjtu.edu.cn). This author's work was supported by NSFC (Grant No.  11771038) and Beijing Natural Science Foundation (Grant No.  Z190002).}
}
\date{\today}
\maketitle

\begin{abstract}
We introduce right eigenvalues and subeigenvalues for square dual complex matrices.   An $n \times n$ dual complex Hermitian matrix has exactly $n$ right eigenvalues and subeigenvalues, which are all real.
The Hermitian matrix is positive semi-definite or definite if and only if all of its right eigenvalues and subeigenvalues are nonnegative or positive, respectively.  A Hermitian matrix can be diagonalized if and only if it has no right subeigenvalues.
Then we present the singular value decomposition
for general dual complex matrices.  The results are further extended to dual quaternion matrices.

\medskip

  \medskip

  \textbf{Key words.} Dual complex matrices, Clifford algebra, right eigenvalues, right subeigenvalues, singular value decomposition 

\end{abstract}

\renewcommand{\Re}{\mathds{R}}
\newcommand{\rank}{\mathrm{rank}}
\renewcommand{\span}{\mathrm{span}}
\newcommand{\X}{\mathcal{X}}
\newcommand{\A}{\mathcal{A}}
\newcommand{\I}{\mathcal{I}}
\newcommand{\B}{\mathcal{B}}
\newcommand{\C}{\mathcal{C}}
\newcommand{\OO}{\mathcal{O}}
\newcommand{\e}{\mathbf{e}}
\newcommand{\0}{\mathbf{0}}
\newcommand{\dd}{\mathbf{d}}
\newcommand{\ii}{\mathbf{i}}
\newcommand{\jj}{\mathbf{j}}
\newcommand{\kk}{\mathbf{k}}
\newcommand{\va}{\mathbf{a}}
\newcommand{\vb}{\mathbf{b}}
\newcommand{\vc}{\mathbf{c}}
\newcommand{\vg}{\mathbf{g}}
\newcommand{\vr}{\mathbf{r}}
\newcommand{\vt}{\rm{vec}}
\newcommand{\vx}{\mathbf{x}}
\newcommand{\vy}{\mathbf{y}}
\newcommand{\vu}{\mathbf{u}}
\newcommand{\vv}{\mathbf{v}}
\newcommand{\y}{\mathbf{y}}
\newcommand{\vz}{\mathbf{z}}
\newcommand{\T}{\top}

\newtheorem{Thm}{Theorem}[section]
\newtheorem{Def}[Thm]{Definition}
\newtheorem{Ass}[Thm]{Assumption}
\newtheorem{Lem}[Thm]{Lemma}
\newtheorem{Prop}[Thm]{Proposition}
\newtheorem{Cor}[Thm]{Corollary}

\section{Introduction}

In 1873, W.K. Clifford \cite{Cl73} introduced dual numbers, dual complex numbers and dual quaternions.  These become the core knowledge of Clifford algebra or geometric algebra.

Dual numbers, dual complex numbers and dual quaternions now have wide applications in automatic differentiation, mechanics, geometry, rigid body motions, robotics and computer graphics \cite{BK20, BLH19, CKJC16, Da99, Gu11, MKO14, WYL12}.

On the other hand, quaternions found wide applications in color image and video processing \cite{CXZ19, JNS19, QLWZ21, XYXZN15}.  In these applications, quaternion matrices play a significant role.  The knowledge about right eigenvalues of quaternion has been developed for several decades \cite{Br51, QLWZ21, WLZZ18, Zh97}.   In particular, Zhang \cite{Zh97} presented singular value decomposition for quaternion matrices.  This work is widely cited in applications \cite{CXZ19, JNS19, QLWZ21, XYXZN15}.

Thus, we wonder if there are counter parts for dual number matrices, dual complex matrices and dual quaternion matrices.

Recently, Gutin \cite{Gu21} presented singular value decomposition for dual number matrices.  There is also study on dual complex matrices \cite{Br20}.  Thus, in this paper, we study dual complex matrices.

There are two difficulties in dual complex numbers.   They are non-communitative and there are zero divisors.   This makes dual complex matrix analysis more difficult, comparing with dual number matrix analysis and quaternion matrix analysis.

In the next section, we present some basic knowledge on dual complex numbers and dual complex matrices.


Then we define right eigenvalues for square dual complex matrices in Section 3. We show that the standard part of a right eigenvalue of a square dual complex matrix and the standard part of the corresponding right eigenvector are exactly a complex eigenvalue and its corresponding complex eigenvector of the standard part of the dual complex matrix.  This indicates that an $n \times n$ dual complex matrix has at most $n$ complex right eigenvalues.  We give a necessary and sufficient condition for a dual complex number to be a right eigenvalue of a square dual complex matrix.     

In Section 4, we first observe that the standard part of a dual complex Hermitian matrix is a complex Hermitian matrix, and the infinitesimal part is a complex skew symmetric matrix.   Then we show that the right eigenvalues of a dual complex Hermitian matrix are real.  Thus, an $n \times n$ dual complex Hermitian matrix has at most $n$ right eigenvalues.   In particular, a simple eigenvalue of the standard part of a dual complex matrix is a right eigenvalue of that dual complex Hermitian matrix.   An
$n \times n$ dual complex Hermitian matrix is diagonalizable if and only if it has $n$ right eigenvalues.

Then in Section 5, we define right subeigenvalues for square dual complex matrices.  There are two right subeigenvectors associated with a right subeigenvalue. These two right subeigenvectors are orthogonal to each other.  There is also a nonzero adjoint parameter associated with them.   If that adjoint parameter is degenerated to zero, then the right subeigenvalue was reduced to a right eigenvalue of multiplicity $2$.  We show that the right subeigenvalues of a dual complex Hermitian matrix are also real, and are multiple eigenvalues of the standard part of that dual complex Hermitian matrix.  Then we show that an $n \times n$ dual complex Hermitian matrix has exactly $n$ right eigenvalues and subeigenvalues.

In Section 6, we present the singular value decomposition
for general dual complex matrices.

Our work lays the foundation of dual complex matrix analysis, and paves the way for further study on dual quaternion matrices and their applications.   In Section 7, we briefly study extensions of our results to dual quaternion matrices.   Some final remarks are made in Section 8.

\section{Dual Complex Numbers and Dual Complex Matrices}

\subsection{Dual Complex Numbers}

We denote the real numbers, the complex numbers, the dual numbers and the dual complex numbers by ${\mathbb R}$, $\mathbb C$, ${\mathbb D}$ and $\mathbb {DC}$, respectively.  Scalars, vectors and matrices are denoted by small letters, bold small letters and capital letters, respectively.
A dual complex number $q$ has the form
$$q = q_0 + q_1\ii + q_2\epsilon\jj + q_3\epsilon\kk,$$
where $q_0, q_1, q_2$ and $q_3$ are real numbers, $\ii, \jj$ and $\kk$ are three imaginary units of quaternions, satisfying
$$\ii^2 = \jj^2 = \kk^2 =\ii\jj\kk = -1,$$
$$\ii\jj = -\jj\ii = \kk, \ \jj\kk = - \kk\jj = \ii, \kk\ii = -\ii\kk = \jj,$$
and $\epsilon$ is the infinitesimal unit, satisfying $\epsilon^2 = 0$.
These rules, along with the distribution law, determine the product of two dual complex numbers.   Unlike multiplication of dual numbers or of complex numbers, the multiplication of dual complex numbers is noncommutative.   Let $q = q_0 + q_1\ii + q_2\epsilon\jj + q_3\epsilon\kk$ as defined above, and
$$p = p_0 + p_1\ii + p_2\epsilon\jj + p_3\epsilon\kk,$$
where $p_0, p_1, p_2$ and $p_3$ are real numbers.  Then
$$pq = (p_0q_0-p_1q_1) + (p_0q_1+p_1q_0)\ii + (p_0q_2+p_2q_0-p_1q_3+p_3q_1)\epsilon\jj + (p_0q_3+p_3q_0+p_1q_2-p_2q_1)\epsilon\kk,$$
$$qp = (p_0q_0-p_1q_1) + (p_0q_1+p_1q_0)\ii + (p_0q_2+p_2q_0+p_1q_3-p_3q_1)\epsilon\jj + (p_0q_3+p_3q_0-p_1q_2+p_2q_1)\epsilon\kk.$$
In general, $pq \not = qp$.

The conjugate of $q = q_0 + q_1\ii + q_2\epsilon\jj + q_3\epsilon\kk$ is
$$\bar q = q_0 - q_1\ii - q_2\epsilon\jj - q_3\epsilon\kk.$$
The magnitude of $q$ is
$$|q| = \sqrt{q_0^2+q_1^2}.$$  We see that
$$q\bar q = \bar qq = |q|^2,$$
$$|pq|=|qp|=|p|\cdot |q|,$$
and
$$\overline {pq} = \bar q\bar p.$$
A dual complex number $q$ is a real number if and only if $q = \bar q$.
The standard part of $q$ is $q_{st}=q_0 + q_1\ii$.  The infinitesimal part of $q$ is $q_\I = q_2+q_3\ii$.  Note that both $q_{st}$ and $q_\I$ are complex numbers, and
$$q = q_{st} + q_\I\epsilon\jj.$$
We say that a dual complex number $q$ is appreciable if its standard part is nonzero. Otherwise, we say that it is infinitesimal.

It follows the multiplicative inverse of an appreciable dual complex number $q$ is given by
$$q^{-1} = {\bar q \over |q|^2}.$$

Two dual complex numbers $p$ and $q$ are said to be similar if there exists an appreciable dual complex number $u$ such that $u^{-1}pu = q$.   This is denoted as $p \sim q$.  Then $\sim$ is an equivalence relation.  Denote the equivalence class containing $q$ by $[q]$.    If $p \sim q$, then $|p|=|q|$.   Thus, if $p \sim q$, then either both $p$ and $q$ are appreciable, or both of them are infinitesimal.

The primary application of dual complex numbers is in representing rigid body motions in 2D space.  The set of dual complex numbers is a subset of the set of dual quaternions.  For more properties and applications of dual complex numbers, see \cite{Gu11, MKO14}.   In \cite{MKO14}, such dual complex numbers are called anti-commutative dual complex numbers, which parametrize two dimension rotation and translation together.  With this presentation, we can easily interpolate or blend two or more rigid transformations at a low computation cost.   In this paper, we simply call them dual complex numbers.

\subsection{Dual Complex Matrices}

The collections of real, complex and dual complex $m \times n$ matrices are denoted by ${\mathbb R}^{m \times n}$, ${\mathbb C}^{m \times n}$ and ${\mathbb {DC}}^{m \times n}$, respectively.

A dual complex matrix $A= (a_{ij}) \in {\mathbb {DC}}^{m \times n}$ can be denoted as
\begin{equation} \label{e1}
A = A_0 + A_1\ii + A_2\epsilon\jj + A_3\epsilon\kk,
\end{equation}
where $A_0, A_1, A_2, A_3 \in {\mathbb R}^{m \times n}$.   The transpose of $A$ is $A^\top = (a_{ji})$. The conjugate of $A$ is $\bar A = (\bar a_{ij})$.   The conjugate transpose of $A$ is $A^* = (\bar a_{ji}) = \bar A^\top$.

For $A \in {\mathbb {DC}}^{m \times n}$, expressed by (\ref{e1}), its standard part is $A_{st} \equiv {\rm st}(A) = A_0 + A_1\ii$, its infinitesimal part is $A_\I \equiv \I(A) = A_2 + A_3\ii$.  Both $A_{st}$ and $A_\I$ are complex matrices, and
$$A = A_{st} + A_\I\epsilon\jj.$$

The Frobenius norm of $A$ is
$$\|A\|_F  = \sqrt{\sum_{i=1}^m \sum_{j=1}^n |a_{ij}|^2}.$$

Let $A \in {\mathbb {DC}}^{m \times n}$ and $B \in {\mathbb {DC}}^{n \times r}$.   Then we have $(AB)^* = B^*A^*$.   But in general, $(AB)^\top \not = B^\top A^\top$ and $\overline {AB} \not = \bar A \bar B$ in general.

A square dual complex matrix $A \in {\mathbb {DC}}^{n \times n}$ is called Hermitian if $A^* = A$; unitary if $A^*A = I$; and invertible (nonsingular) if $AB = BA = I$ for some $B \in {\mathbb {DC}}^{n \times n}$.
We have $(AB)^{-1} = B^{-1}A^{-1}$ if $A$ and $B$ are invertible, and $\left(A^*\right)^{-1} = \left(A^{-1}\right)^*$ if $A$ is invertible.

Applications of dual complex matrices include classical mechanics and robotics, complex representations of the Lorentz group in relativity and electrodynamics, conformal mappings in computer vision, the physics of scattering processes, etc., see \cite{Br20}.

\subsection{Dual Complex Vectors}

Denote $\vx \in {\mathbb {DC}}^{n \times 1}$ and $\vx^\top \in {\mathbb {DC}}^{1 \times n}$ for column and row dual complex vectors.   We say that $\vx \in {\mathbb {DC}}^{n \times 1}$ is appreciable if at least one of its component is appreciable.   The magnitude of $\vx \in {\mathbb {DC}}^{n \times 1}$ is
$$\|\vx \| = \sqrt{\vx^*\vx}.$$
If $\|\vx\| = 1$, then we say that $\vx$ is a unit column vector.  If $\vx, \vy \in {\mathbb {DC}}^{n \times 1}$ and $\vx^*\vy = 0$, then we say that $\vx$ and $\vy$ are orthogonal to each other.  If
$\vx^{(1)}, \cdots, \vx^{(n)} \in {\mathbb {DC}}^{n \times 1}$ and $\left(\vx^{(i)}\right)^*\vx^{(j)} = \delta_{ij}$ for $i, j = 1, \cdots, n$, where $\delta_{ij}$ is the Kronecker symbol, then we say that
$\{ \vx^{(1)}, \cdots, \vx^{(n)} \}$ is an orthonormal basis of ${\mathbb {DC}}^{n \times 1}$.
A square dual complex matrix $A \in {\mathbb {DC}}^{n \times n}$ is unitary if and only if its column vectors form an orthonormal basis of ${\mathbb {DC}}^{n \times 1}$.

The following properties for complex matrices still hold for dual complex matrices:

1. If $A \in {\mathbb {DC}}^{n \times n}$ and $\vx \in {\mathbb {DC}}^{n \times 1}$, then
$$\|A\vx\| \le \|A\|_F \|\vx\|.$$

2. If $U \in {\mathbb {DC}}^{n \times n}$ is unitary and $\vx \in {\mathbb {DC}}^{n \times 1}$, then
$$\|U\vx\| = \|\vx\|.$$

3. (Unitary Invariance) If $U \in {\mathbb {DC}}^{m \times m}$ and $V \in {\mathbb {DC}}^{n \times n}$ are unitary, and $A \in {\mathbb {DC}}^{m \times n}$, then
$$\|UAV\|_F = \|A\|_F.$$
The proof of the unitary invariance property is the same as the proof of orthogonal invariance in \cite{GV13}.

Note that if both $\vx$ and $q$ are appreciable, then $\vx q$ is appreciable.

The following proposition can be proved by definition directly.

\begin{Prop} \label{p2.0}
Suppose that $A \in {\mathbb {DC}}^{n \times n}$ is invertible.   Then all column and row vectors of $A$ are appreciable.
\end{Prop}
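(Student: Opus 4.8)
The plan is to argue by contrapositive: if some column vector of $A$ is not appreciable, then $A$ cannot be invertible. So suppose $A \in {\mathbb{DC}}^{n\times n}$ has a column, say the $j$-th column $\va$, that is infinitesimal, i.e. its standard part is the zero vector. Writing $A = A_{st} + A_{\I}\epsilon\jj$, this means the $j$-th column of $A_{st}$ is zero, hence $A_{st}$ is a singular complex matrix. First I would show that the standard part map is multiplicative on dual complex matrices: for $A \in {\mathbb{DC}}^{n\times n}$ and $B \in {\mathbb{DC}}^{n\times n}$, ${\rm st}(AB) = {\rm st}(A)\,{\rm st}(B)$. This follows directly from the block expansion $AB = A_{st}B_{st} + (A_{st}B_{\I} + A_{\I}B_{st})\epsilon\jj$ together with the multiplication rules for $\epsilon$, $\jj$ (the $\epsilon\jj$ terms carry no standard part). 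Also ${\rm st}(I) = I$ in ${\mathbb{C}}^{n\times n}$.

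Now if $A$ were invertible, there would exist $B \in {\mathbb{DC}}^{n\times n}$ with $AB = BA = I$. Applying the standard part map and its multiplicativity gives ${\rm st}(A)\,{\rm st}(B) = {\rm st}(B)\,{\rm st}(A) = I$ in ${\mathbb{C}}^{n\times n}$, so $A_{st}$ is an invertible complex matrix, contradicting the singularity established above. Hence every column of $A$ is appreciable. The row-vector statement follows by the identical argument applied to $A^\top$ (or by noting that $A$ invertible implies $A^\top$ has the analogous property via $B^\top A^\top$-type relations — but the cleanest route is simply to repeat the column argument with rows, since ${\rm st}(A^\top) = ({\rm st}(A))^\top$ and $A_{st}$ singular iff $(A_{st})^\top$ singular). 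Alternatively, since $A$ invertible implies $A^*$ invertible (using $(A^*)^{-1} = (A^{-1})^*$ from the excerpt), one concludes the columns of $A^*$ are appreciable, which are precisely the conjugated rows of $A$, and conjugation preserves appreciability.

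I do not anticipate a real obstacle here; the only point needing a line of care is verifying that the standard part is a ring homomorphism ${\mathbb{DC}}^{n\times n} \to {\mathbb{C}}^{n\times n}$, and in particular checking the sign bookkeeping in the product $A_{st}B_{\I}\epsilon\jj$ etc., which follows mechanically from $\epsilon^2 = 0$ and the quaternionic relations given in Section 2. Once that homomorphism property is in hand, the result is immediate from the fact that a complex matrix with a zero column (or zero row) is singular.
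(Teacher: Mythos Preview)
Your argument is correct, and the paper in fact gives no proof beyond the remark preceding the proposition that it ``can be proved by definition directly''; your contrapositive via the multiplicativity of the standard-part map ${\rm st}(AB)={\rm st}(A)\,{\rm st}(B)$ is a natural way to supply what the paper leaves to the reader. One small slip to correct in your write-up: in the block expansion the infinitesimal part of $AB$ is $A_{st}B_{\I} + A_{\I}\,\overline{B_{st}}$, with a conjugate on $B_{st}$ coming from $\jj B_{st} = \overline{B_{st}}\,\jj$ (cf.\ Lemma~\ref{l3.1}), rather than $A_{st}B_{\I} + A_{\I}B_{st}$; this does not affect the standard part, so your key identity ${\rm st}(AB)=A_{st}B_{st}$ and the rest of the argument are unchanged.
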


\section{Right Eigenvalues of Dual Complex Matrices}

Suppose that $A \in {\mathbb {DC}}^{n \times n}$.   If there are $\lambda \in \mathbb {DC}$ and $\vx \in
{\mathbb {DC}}^{n \times 1}$, where $\vx$ is appreciable, such that
\begin{equation} \label{e2}
A\vx = \vx\lambda,
\end{equation}
then we say that $\lambda$ is a right eigenvalue of $A$, with $\vx$ as a corresponding right eigenvector.

Here, we request that $\vx$ is appreciable.   See the definition of eigenvectors of dual number matrices in \cite{Gu21}.

Note that $A\vx = \vx\lambda$ implies $A(\vx q) = (A\vx)q = \vx \lambda q = (\vx q)(q^{-1}\lambda q)$ if $q$ is appreciable.   Thus, if $\lambda$ is a right eigenvalue of $A$, then any member of $[\lambda]$ is a right eigenvalue of $A$.

We may also define left eigenvalues, but we do not go to this direction, as it is not related with our discussion.

We prove a lemma.

\begin{Lem} \label{l3.1}
Let $\va \in \mathbb {C}^n$.  Then $\jj \va = \bar \va \jj$.
\end{Lem}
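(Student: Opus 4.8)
The statement $\jj\va = \bar\va\,\jj$ for $\va \in \mathbb{C}^n$ is really a coordinatewise claim, so the plan is to reduce it to a single scalar identity: it suffices to show that $\jj z = \bar z\,\jj$ for every complex number $z = z_0 + z_1\ii$ with $z_0, z_1 \in \mathbb{R}$. Once this scalar identity is established, applying it entry by entry to the vector $\va$ immediately yields $\jj\va = \bar\va\,\jj$.

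To prove the scalar identity, I would simply expand both sides using the quaternion multiplication rules stated in the paper, namely $\ii\jj = \kk$ and $\jj\ii = -\kk$ (equivalently $\jj\ii = -\ii\jj$). On the left, $\jj z = \jj(z_0 + z_1\ii) = z_0\jj + z_1\jj\ii = z_0\jj - z_1\ii\jj = z_0\jj - z_1\kk$, using that real scalars commute with $\jj$ and $\ii$. On the right, $\bar z\,\jj = (z_0 - z_1\ii)\jj = z_0\jj - z_1\ii\jj = z_0\jj - z_1\kk$. The two expressions agree, which gives $\jj z = \bar z\,\jj$. (Here I am using the embedding $\epsilon\jj$ versus $\jj$ consistently; since the claim is stated for the imaginary unit $\jj$ of the quaternions, the relevant rules are exactly $\ii\jj = \kk = -\jj\ii$, and $\epsilon$ plays no role.)

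There is essentially no obstacle here: the only thing to be careful about is the noncommutativity, i.e. keeping track of the order of $\ii$ and $\jj$ and not accidentally using $\ii\jj = \jj\ii$. The key fact doing all the work is the anticommutation relation $\jj\ii = -\ii\jj$, which converts the factor $\ii$ sitting to the left of $\jj$ into $-\ii$ sitting to the right, and this sign flip is precisely what implements complex conjugation. I would write the proof in two short lines — the scalar computation followed by the remark that it extends componentwise — since nothing deeper is involved.
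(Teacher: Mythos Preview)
Your proof is correct and is essentially the same as the paper's: the paper writes $\va = \vb + \vc\ii$ with $\vb,\vc$ real vectors and computes $\jj\va = \vb\jj - \vc\kk = (\vb - \vc\ii)\jj = \bar\va\jj$, which is exactly your scalar computation carried out componentwise in one line. The only cosmetic difference is that the paper works directly with the vector rather than first isolating the scalar case.
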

\begin{proof}
Let $\va = \vb + \vc\ii$, where $\vb$ and $\vc$ are real vectors.  Then
$$\jj \va = \vb\jj - \vc\kk = (\vb - \vc\ii)\jj = \bar \va \jj.$$
\end{proof}

We now establish conditions for a dual complex number to be a right eigenvalue of a square dual complex matrix.

We have the following theorem.

\begin{Thm} \label{t3.2}
Suppose that $A = A_{st}+A_\I\epsilon\jj \in {\mathbb {DC}}^{n \times n}$.   Then $\lambda = \lambda_{st} + \lambda_\I \epsilon \jj$ is a right eigenvalue of $A$ with a right eigenvector $\vx = \vx_{st}+\vx_I\epsilon\jj$ only if $\lambda_{st}$ is an eigenvalue of the complex matrix $A_{st}$ with an
eigenvector $\vx_{st}$, i.e., $\vx_{st} \not = \0$ and
\begin{equation} \label{e3}
A_{st}\vx_{st} = \lambda_{st} \vx_{st}.
\end{equation}
Furthermore, if $\lambda_{st}$ is an eigenvalue of the complex matrix $A_{st}$ with an
eigenvector $\vx_{st}$, then $\lambda$ is a right eigenvalue of $A$ with a right eigenvector $\vx$ if and only if $\lambda_\I$ and $\vx_\I$ satisfy
\begin{equation} \label{e4}
\lambda_\I\vx_{st} = A_\I\bar \vx_{st} + A_{st}\vx_\I - \bar \lambda_{st} \vx_\I.
\end{equation}
\end{Thm}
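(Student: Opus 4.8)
The plan is to write the eigenvalue equation $A\vx = \vx\lambda$ in terms of standard and infinitesimal parts, turning it into a pair of purely complex-matrix identities. Before doing so I would record the elementary facts needed: $\epsilon^2 = 0$; $\epsilon$ commutes with every complex scalar; and, by Lemma~\ref{l3.1} applied both to complex vectors and (as the special case $n=1$) to complex scalars, $\jj\va = \bar\va\jj$. I would also note that a dual complex vector $\vx = \vx_{st}+\vx_\I\epsilon\jj$ is appreciable precisely when $\vx_{st}\neq\0$, so the standing appreciability hypothesis on the eigenvector is exactly the requirement $\vx_{st}\neq\0$.

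Next I would expand the left-hand side $A\vx = (A_{st}+A_\I\epsilon\jj)(\vx_{st}+\vx_\I\epsilon\jj)$. The term $A_\I\epsilon\jj\vx_\I\epsilon\jj$ carries the factor $\epsilon^2 = 0$ and drops out; applying $\jj\vx_{st}=\bar\vx_{st}\jj$ and pulling $\epsilon$ past the complex entries of $\bar\vx_{st}$, the remaining terms collapse to
$$A\vx = A_{st}\vx_{st} + \big(A_{st}\vx_\I + A_\I\bar\vx_{st}\big)\epsilon\jj.$$
Expanding the right-hand side $\vx\lambda = (\vx_{st}+\vx_\I\epsilon\jj)(\lambda_{st}+\lambda_\I\epsilon\jj)$ in the same way, discarding the $\epsilon^2$ term and using $\jj\lambda_{st}=\bar\lambda_{st}\jj$ together with commutativity of complex scalars, I would obtain
$$\vx\lambda = \lambda_{st}\vx_{st} + \big(\lambda_\I\vx_{st} + \bar\lambda_{st}\vx_\I\big)\epsilon\jj.$$

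Finally I would equate standard and infinitesimal parts of $A\vx = \vx\lambda$. The standard part gives $A_{st}\vx_{st} = \lambda_{st}\vx_{st}$, which, combined with $\vx_{st}\neq\0$, is exactly~(\ref{e3}); this establishes the ``only if'' assertion of the first statement. The infinitesimal part gives $A_{st}\vx_\I + A_\I\bar\vx_{st} = \lambda_\I\vx_{st} + \bar\lambda_{st}\vx_\I$, i.e.~(\ref{e4}). For the converse in the ``furthermore'' clause: assuming $A_{st}\vx_{st}=\lambda_{st}\vx_{st}$ with $\vx_{st}\neq\0$ and that~(\ref{e4}) holds, $\vx$ is appreciable and the two displayed expansions show that both the standard and infinitesimal parts of $A\vx=\vx\lambda$ are satisfied, hence $\lambda$ is a right eigenvalue of $A$ with eigenvector $\vx$. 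I do not expect a genuine obstacle here; the one point requiring care is the bookkeeping of the noncommutative products — making sure each $\jj$ is commuted past a complex quantity with the conjugation supplied by Lemma~\ref{l3.1}, and that the two $\epsilon^2$ cross terms are correctly identified as vanishing.
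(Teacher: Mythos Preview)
Your proposal is correct and follows essentially the same approach as the paper: expand $A\vx=\vx\lambda$ in standard/infinitesimal parts, discard the $\epsilon^2$ cross terms, apply Lemma~\ref{l3.1} to commute $\jj$ past complex quantities, and equate parts to obtain (\ref{e3}) and (\ref{e4}). Your write-up is in fact a bit more explicit than the paper's about where the $\epsilon^2$ terms drop and where the conjugation from Lemma~\ref{l3.1} enters, but the argument is the same.
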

\begin{proof}
By definition, $\lambda$ is a right eigenvalue of $A$ with a right eigenvector $\vx$ if and only if $\vx_{st} \not = \0$ and $A\vx = \vx \lambda$.   Then $A\vx = \vx \lambda$ is equivalent to
$$(A_{st}+A_\I\epsilon\jj)(\vx_{st}+\vx_I\epsilon\jj) = (\vx_{st}+\vx_I\epsilon\jj)(\lambda_{st} + \lambda_\I \epsilon \jj).$$
This is further equivalent to $A_{st}\vx_{st} = \vx_{st}\lambda_{st}$, i.e., (\ref{e3}), and
\begin{equation} \label{e5}
A_{st}\vx_\I\epsilon \jj + A_\I \epsilon \jj \vx_{st} = \vx_{st} \lambda_\I \epsilon \jj + \vx_\I \epsilon\jj \lambda_{st}.
\end{equation}
By Lemma \ref{l3.1}, (\ref{e5}) is equivalent to
$$A_{st}\vx_\I \jj + A_\I \bar \vx_{st}\jj = \vx_{st} \lambda_\I \jj + \vx_\I \bar \lambda_{st}\jj,$$
which is further equivalent to (\ref{e4}).    The conclusions of this theorem follow from
these.
\end{proof}

Let $\lambda \equiv \lambda_{st}$ and $\lambda_\I = 0$ in this theorem.   We have the following corollary.

\begin{Cor} \label{c3.3}
Suppose that $A = A_{st}+A_\I\epsilon\jj \in {\mathbb {DC}}^{n \times n}$.   Then  a complex number  $\lambda$ is a right eigenvalue of $A$ with a right eigenvector $\vx_{st}+\vx_I\epsilon\jj$ only if $\lambda$ is an eigenvalue of the complex matrix $A_{st}$ with an
eigenvector $\vx_{st}$, i.e., $\vx_{st} \not = \0$ and
\begin{equation} \label{e6}
A_{st}\vx_{st} = \lambda \vx_{st}.
\end{equation}
This indicates that $A$ has at most $n$ complex right eigenvalues.   Furthermore, if $\lambda$ is an eigenvalue of the complex matrix $A_{st}$ with an
eigenvector $\vx_{st}$, then $\lambda$ is a right eigenvalue of $A$ with a right eigenvector $\vx$ if and only if $\vx_\I$ satisfies
\begin{equation} \label{e7}
A_\I\bar \vx_{st} + A_{st}\vx_\I - \bar \lambda \vx_\I = \0.
\end{equation}
\end{Cor}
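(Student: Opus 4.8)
The plan is to derive this corollary directly from Theorem \ref{t3.2} by specializing to a candidate eigenvalue with vanishing infinitesimal part. First I would observe that a complex number $\lambda$, viewed as an element of $\mathbb{DC}$, is exactly the dual complex number with $\lambda_{st} = \lambda$ and $\lambda_\I = 0$, and correspondingly $\bar\lambda_{st} = \bar\lambda$; this is immediate from the definitions of the standard and infinitesimal parts in Section 2. Substituting $\lambda_{st} = \lambda$ and $\lambda_\I = 0$ into Theorem \ref{t3.2}, equation (\ref{e3}) becomes precisely (\ref{e6}). This yields the ``only if'' assertion: if $\lambda \in \mathbb{C}$ is a right eigenvalue of $A$ with right eigenvector $\vx = \vx_{st} + \vx_\I\epsilon\jj$, then $\vx_{st} \not= \0$ and $A_{st}\vx_{st} = \lambda\vx_{st}$, so $\lambda$ is an eigenvalue of the complex matrix $A_{st}$ with eigenvector $\vx_{st}$.

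For the counting statement, I would argue as follows: by the ``only if'' part just established, every complex right eigenvalue of $A$ lies in the spectrum of the $n \times n$ complex matrix $A_{st}$, and this spectrum consists of at most $n$ distinct complex numbers. Hence $A$ has at most $n$ complex right eigenvalues.

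For the ``furthermore'' part, I would assume $\lambda \in \mathbb{C}$ is an eigenvalue of $A_{st}$ with eigenvector $\vx_{st}$, so that (\ref{e3}) holds with $\lambda_{st} = \lambda$. Theorem \ref{t3.2} then states that $\lambda$ (with $\lambda_\I = 0$) is a right eigenvalue of $A$ with right eigenvector $\vx = \vx_{st} + \vx_\I\epsilon\jj$ if and only if (\ref{e4}) holds. Setting $\lambda_\I = 0$ in (\ref{e4}) collapses it to $\0 = A_\I\bar\vx_{st} + A_{st}\vx_\I - \bar\lambda\vx_\I$, which is exactly (\ref{e7}), completing the argument.

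I do not expect any genuine obstacle here, since everything is a direct specialization of Theorem \ref{t3.2}; the only points needing a moment's attention are the identification of a complex number with the dual complex number having zero infinitesimal part and the resulting simplification $\bar\lambda_{st} = \bar\lambda$, both of which are routine consequences of the definitions already in place.
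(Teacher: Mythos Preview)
Your proposal is correct and follows essentially the same approach as the paper: the paper simply remarks ``Let $\lambda \equiv \lambda_{st}$ and $\lambda_\I = 0$ in this theorem'' before stating the corollary, which is exactly the specialization of Theorem~\ref{t3.2} you carry out, and your additional remarks on the counting statement and the reduction of (\ref{e4}) to (\ref{e7}) are routine unpackings of that one-line observation.
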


A square quaternion matrix always has complex right eigenvalues \cite{Br51, WLZZ18, Zh97}.   A square dual complex matrix may have no complex right eigenvalue at all.  This is very different.   See the following example.

{\bf Example 1}  Let $A = I_n+I_n\epsilon \jj$, i.e., $A_{st} = A_\I = I_n$.  Then the only eigenvalue of $A_{st}$ is $\lambda_{st} = 1$ with multiplicity $n$.   Then (\ref{e7}) is equivalent to $\bar \vx_{st} = \0$, which contradicts that $\vx_{st} \not = \0$.  Thus, $A$ has no complex right eigenvalue.   However, let $\vx = \ve + \ve \ii$, where $\ve = (1, \cdots, 1)^\top$,  and
$$\lambda = 1 -\ii\epsilon \jj.$$
 Then we see that $\lambda$ is a right eigenvalue of $A$ with $\vx$ as its right eigenvector.

In the next section, we will give an example that a dual complex Hermitian matrix has no right eigenvalue at all.

\section{Right Eigenvalues and Right Eigenvectors of Hermitian Matrices}

Suppose that $A \in {\mathbb {DC}}^{n \times n}$ is a Hermitian matrix.   Then for $\vx \in {\mathbb {DC}}^{n \times 1}$, $\vx^* A \vx = (\vx^* A \vx)^*$ is a real number.  We say that $A$ is positive semi-definite if for all $\vx \in {\mathbb {DC}}^{n \times 1}$, $\vx^* A \vx \ge 0$.    We say that $A$ is positive definite if for all $\vx \in {\mathbb {DC}}^{n \times 1}$ and $\vx$ is appreciable, $\vx^* A \vx > 0$.

By definition, we have the following proposition.

\begin{Prop} \label{p4.1}
A dual complex matrix $A = A_{st} + A_\I\epsilon\jj \in {\mathbb {DC}}^{n \times n}$ is a Hermitian matrix if and only if $A_{st}$ is a complex Hermitian matrix and $A_\I$ is a skew-symmetric complex matrix, i.e., $A_{st}^* = A_{st}$ and $A_\I^\top = - A_\I$.
\end{Prop}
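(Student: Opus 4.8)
The plan is to reduce the Hermitian condition $A^* = A$ to conditions on the complex matrices $A_{st}$ and $A_\I$ by computing $A^* = \bar A^\top$ explicitly in terms of the standard/infinitesimal decomposition. The whole argument is elementary; it is really just careful bookkeeping, which is why the paper announces it as holding ``by definition.''

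First I would establish the scalar rule governing conjugation in the $\epsilon\jj$-slot: for a dual complex number $q = q_{st} + q_\I\epsilon\jj$ with complex $q_{st}, q_\I$, one has $\bar q = \bar q_{st} - q_\I\epsilon\jj$, where the infinitesimal part picks up a sign but is \emph{not} itself conjugated. This can be read off directly from $q = q_0 + q_1\ii + q_2\epsilon\jj + q_3\epsilon\kk$ and $\bar q = q_0 - q_1\ii - q_2\epsilon\jj - q_3\epsilon\kk$; alternatively it follows from $\overline{pq} = \bar q\bar p$, the identity $\overline{\epsilon\jj} = -\epsilon\jj$, and the scalar version of Lemma \ref{l3.1}, namely $\jj c = \bar c\jj$ for $c \in \mathbb C$. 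Applying this entrywise gives $\bar A = \bar A_{st} - A_\I\epsilon\jj$, the bars on the right now being entrywise complex conjugation of complex matrices.

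Next I would transpose. Since transposition merely permutes indices and each entry of $A_\I\epsilon\jj$ is a complex scalar times the fixed unit $\epsilon\jj$, we get $(A_\I\epsilon\jj)^\top = A_\I^\top\epsilon\jj$, hence $A^* = \bar A^\top = \bar A_{st}^\top - A_\I^\top\epsilon\jj = (A_{st})^* - A_\I^\top\epsilon\jj$. Here $(A_{st})^*$ is a complex matrix — it is the standard part of $A^*$ — and $-A_\I^\top$ is a complex matrix — the infinitesimal part of $A^*$. By uniqueness of the standard/infinitesimal decomposition, $A^* = A$ holds if and only if $(A_{st})^* = A_{st}$ and $-A_\I^\top = A_\I$, i.e. $A_{st}$ is complex Hermitian and $A_\I$ is complex skew-symmetric; the converse is immediate by reversing these equalities.

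The one place a naive computation can slip is precisely the rule for conjugation on the $\epsilon\jj$-component: one must not inadvertently conjugate $A_\I$ inside $A_\I\epsilon\jj$ when forming $\bar A$. As a sanity check I would also verify the statement through the real components: $A^* = A$ is equivalent to $A_0$ symmetric and $A_1, A_2, A_3$ skew-symmetric, while ``$A_{st} = A_0 + A_1\ii$ Hermitian'' is exactly ``$A_0$ symmetric, $A_1$ skew-symmetric'' and ``$A_\I = A_2 + A_3\ii$ skew-symmetric'' is exactly ``$A_2, A_3$ skew-symmetric,'' so the four conditions match. In the write-up I would present the compact $\epsilon\jj$-decomposition argument and relegate the component check to a parenthetical remark.
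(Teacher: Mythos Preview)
Your proposal is correct and is exactly the ``by definition'' unpacking the paper has in mind: compute $A^* = A_{st}^* - A_\I^\top\epsilon\jj$ from the scalar rule $\bar q = \bar q_{st} - q_\I\epsilon\jj$ and compare standard/infinitesimal parts. The real-component sanity check is a nice redundancy but not needed for the argument.
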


Then we have the following proposition.

\begin{Prop} \label{p4.2}
A right eigenvalue $\lambda$ of a Hermitian matrix $A = A_{st} + A_\I\epsilon\jj \in {\mathbb {DC}}^{n \times n}$ must be a real number and is an eigenvalue of the complex Hermitian matrix $A_{st}$.   Thus,
a dual complex Hermitian matrix has at most $n$ real right eigenvalues and no other right eigenvalues.

A right eigenvalue of a positive semi-definite Hermitian matrix $A \in {\mathbb {DC}}^{n \times n}$ must be a nonnegative number.   Thus, a dual complex positive semi-definite Hermitian matrix has at most $n$ nonnegative right eigenvalues and no other right eigenvalues.

A right eigenvalue of a positive definite Hermitian matrix $A \in {\mathbb {DC}}^{n \times n}$ must be a positive number.  Thus, a dual complex positive definite Hermitian matrix has at most $n$ positive right eigenvalues and no other right eigenvalues.
\end{Prop}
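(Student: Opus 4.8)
The plan is to combine the two structural identities (\ref{e3}) and (\ref{e4}) supplied by Theorem \ref{t3.2} with the description of Hermitian dual complex matrices in Proposition \ref{p4.1}, namely $A_{st}^* = A_{st}$ and $A_\I^\top = -A_\I$. Write $\lambda = \lambda_{st} + \lambda_\I\epsilon\jj$ for a right eigenvalue of $A$ with appreciable right eigenvector $\vx = \vx_{st} + \vx_\I\epsilon\jj$.

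For the first assertion I would argue in two steps. Step one: by Theorem \ref{t3.2}, $\vx_{st} \not= \0$ and $A_{st}\vx_{st} = \lambda_{st}\vx_{st}$; since $A_{st}$ is complex Hermitian, the classical spectral theory forces $\lambda_{st}$ to be real and to be an eigenvalue of $A_{st}$, so in particular $\bar\lambda_{st} = \lambda_{st}$. Step two: substitute $\bar\lambda_{st} = \lambda_{st}$ into (\ref{e4}) to get $\lambda_\I\vx_{st} = A_\I\bar\vx_{st} + (A_{st} - \lambda_{st}I)\vx_\I$, then left-multiply by $\vx_{st}^*$. Because $A_{st} - \lambda_{st}I$ is Hermitian and $\vx_{st}$ is a $\lambda_{st}$-eigenvector, one has $\vx_{st}^*(A_{st} - \lambda_{st}I) = \left((A_{st}-\lambda_{st}I)\vx_{st}\right)^* = \0$, so the last term drops and we are left with $\lambda_\I \|\vx_{st}\|^2 = \vx_{st}^* A_\I\bar\vx_{st}$, where $\|\vx_{st}\|^2 = \vx_{st}^*\vx_{st} > 0$. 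The key point is that the scalar $s := \vx_{st}^* A_\I\bar\vx_{st} = \bar\vx_{st}^\top A_\I\bar\vx_{st}$ equals its own transpose, so skew-symmetry gives $s = s^\top = \bar\vx_{st}^\top A_\I^\top\bar\vx_{st} = -s$, hence $s = 0$ and $\lambda_\I = 0$. Thus $\lambda = \lambda_{st}$ is a real eigenvalue of $A_{st}$; since an $n\times n$ complex matrix has at most $n$ distinct eigenvalues, $A$ has at most $n$ right eigenvalues and none outside $\Re$.

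For the second and third assertions I would use the quadratic form directly, now knowing $\lambda \in \Re$. From $A\vx = \vx\lambda$ and the fact that $\bar x_i x_i = |x_i|^2$ is real for each component, we get $\vx^* A\vx = \vx^*(\vx\lambda) = (\vx^*\vx)\lambda = \|\vx_{st}\|^2\lambda$, and $\|\vx_{st}\|^2 = \vx^*\vx$ is a positive real number because $\vx$ is appreciable. If $A$ is positive semi-definite then $\vx^* A\vx \ge 0$, hence $\|\vx_{st}\|^2\lambda \ge 0$ and $\lambda \ge 0$; if $A$ is positive definite then $\vx^* A\vx > 0$ (as $\vx$ is appreciable), hence $\lambda > 0$. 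Combined with the first assertion this yields the stated counts.

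The main obstacle is recognizing that after using reality of $\lambda_{st}$, equation (\ref{e4}) pairs $A_\I$ with $\bar\vx_{st}$ through the ordinary transpose, not the conjugate transpose; consequently it is exactly the skew-symmetry of $A_\I$ — rather than any Hermitian-type cancellation — that annihilates $\vx_{st}^* A_\I\bar\vx_{st}$. Once this is seen, the remainder is routine bookkeeping with the standard/infinitesimal splitting.
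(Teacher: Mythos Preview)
Your argument is correct, but it is organized differently from the paper's. The paper proves reality of $\lambda$ in a single stroke at the dual complex level: for Hermitian $A$ the scalar $\vx^*A\vx$ is real, while $A\vx=\vx\lambda$ gives $\vx^*A\vx=\|\vx\|^2\lambda$ with $\|\vx\|^2>0$; hence $\lambda\in\Re$, and then Corollary~\ref{c3.3} identifies $\lambda$ as an eigenvalue of $A_{st}$. You instead descend to the complex components via Theorem~\ref{t3.2}: classical Hermitian theory forces $\lambda_{st}\in\Re$, and you kill $\lambda_\I$ by pairing (\ref{e4}) with $\vx_{st}$ and invoking the skew-symmetry $A_\I^\top=-A_\I$ to show $\bar\vx_{st}^{\,\top}A_\I\bar\vx_{st}=0$. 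Your route is longer but makes explicit the mechanism behind $\lambda_\I=0$; in fact the skew-symmetry identity you isolate is exactly the one the paper deploys later in the proof of Theorem~\ref{t4.3}. For the positive (semi)definite clauses the two proofs coincide.
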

\begin{proof}
Suppose that $A \in {\mathbb {DC}}^{n \times n}$ is a Hermitian matrix, $\lambda$ is a right eigenvalue of $A$ with right eigenvectors $\vx$.   Then
$$\vx^*A\vx = \vx^*\vx\lambda = \|\vx\|^2\lambda$$
is a real number.  Thus, $\lambda$ is a real number.  By Corollary \ref{c3.3} and Proposition \ref{p4.1}, $\lambda$ is an eigenvalue of the complex Hermitian matrix $A_{st}$.   Thus, a dual complex Hermitian matrix has at most $n$ real right eigenvalues and no other right eigenvalues.

The other conclusions can be proved similarly.
\end{proof}

We now give an example that a dual complex Hermitian matrix has no right eigenvalue at all.

{\bf Example 2}  Let $n=2$ and
$$A = \begin{bmatrix} 1 & \epsilon\jj \\ -\epsilon \jj & 1 \end{bmatrix}.$$
By definition, $A$ is a Hermitian matrix.  Denote
$$A_{st} = I_2 = \begin{bmatrix} 1 & 0 \\ 0 & 1 \end{bmatrix} \ {\rm and}\ A_\I = \begin{bmatrix} 0 & 1 \\ -1 & 0 \end{bmatrix}.$$
Suppose that $A$ has a right eigenvalue $\lambda$.  By Proposition \ref{p4.2}, $\lambda$ is a real number and is an eigenvalue of $I_2$.  Thus, $\lambda = 1$.  Assume it has a right eigenvector $\vx = \vx_{st} + \vx_\I\epsilon \jj$.  By Corollary \ref{c3.3}, it needs to satisfy (\ref{e7}), i.e., $A_\I \bar \vx_{st} = \0$.   This implies that $\vx_{st} = \0$.   Then $\vx$ is not appreciable, and is not a right eigenvector.
Therefore, $A$ has no right eigenvalue at all.

Fortunately, the above example only occurs in the case that $A_{st}$ has multiple eigenvalues.   Recall that an eigenvalue with multiplicity $1$ is called a simple eigenvalue.

\begin{Thm} \label{t4.3}
Let $A = A_{st} + A_\I\epsilon\jj \in {\mathbb {DC}}^{n \times n}$ be a Hermitian matrix.  If $\lambda$ is a simple eigenvalue of $A_{st}$, then $\lambda$ is a right eigenvalue of $A$.
\end{Thm}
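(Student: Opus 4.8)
The plan is to reduce the assertion, via Corollary~\ref{c3.3}, to the consistency of a single complex linear system. Since $A_{st}$ is a complex Hermitian matrix (Proposition~\ref{p4.1}), its simple eigenvalue $\lambda$ is automatically real, so $\bar\lambda = \lambda$. Let $\vx_{st} \ne \0$ be a complex eigenvector of $A_{st}$ for $\lambda$. By Corollary~\ref{c3.3} the real number $\lambda$ is a right eigenvalue of $A$ with right eigenvector $\vx = \vx_{st} + \vx_\I\epsilon\jj$ if and only if $\vx_\I \in {\mathbb C}^n$ satisfies (\ref{e7}), which here reads
$$(A_{st} - \lambda I)\vx_\I = -A_\I\bar \vx_{st}.$$
Any such $\vx$ is appreciable because $\vx_{st} \ne \0$, so the whole theorem comes down to producing one solution $\vx_\I$ of this system.

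To do so I would invoke the Fredholm alternative for the Hermitian matrix $M := A_{st} - \lambda I$. Simplicity of $\lambda$ gives $\ker M = \mathrm{span}\{\vx_{st}\}$, and since $M$ is Hermitian its range equals $(\ker M)^{\perp} = \{\vy \in {\mathbb C}^n : \vx_{st}^*\vy = 0\}$; hence the system is solvable precisely when $\vx_{st}^*A_\I\bar \vx_{st} = 0$. This orthogonality holds automatically: $\vx_{st}^*A_\I\bar \vx_{st} = \bar \vx_{st}^{\top}A_\I\bar \vx_{st}$ is a scalar, hence equals its own transpose $\bar \vx_{st}^{\top}A_\I^{\top}\bar \vx_{st}$, and by Proposition~\ref{p4.1} we have $A_\I^{\top} = -A_\I$, so the scalar equals its own negative and is therefore $0$. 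Thus a solution $\vx_\I$ exists, and $\vx = \vx_{st} + \vx_\I\epsilon\jj$ is the required right eigenvector.

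The one genuinely substantive point is this last compatibility check, so that is where I would take the most care: it rests on the Hermitian structure of $A_{st}$ pinning down $\mathrm{range}(M)$ as exactly $\{\vx_{st}\}^{\perp}$ (nothing smaller), combined with the fact — recorded in Proposition~\ref{p4.1} — that the infinitesimal part $A_\I$ is complex skew-symmetric, not skew-Hermitian, which forces the quadratic form $\bar \vx_{st}^{\top}A_\I\bar \vx_{st}$ to vanish. Everything else is bookkeeping of the kind already carried out in the proof of Theorem~\ref{t3.2} and in Lemma~\ref{l3.1}. I note that $\vx_\I$ is determined only modulo $\mathrm{span}\{\vx_{st}\}$, mirroring the scaling freedom $\vx \mapsto \vx q$ for eigenvectors; one may normalize, e.g.\ by imposing $\vx_{st}^*\vx_\I = 0$, but this is not needed for the existence claim.
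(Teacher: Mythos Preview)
Your proposal is correct and follows essentially the same route as the paper: reduce via Corollary~\ref{c3.3} to the solvability of $(\lambda I - A_{st})\vx_\I = A_\I\bar\vx_{st}$, use that $A_{st}$ is Hermitian and $\lambda$ simple to identify the compatibility condition as $\vx_{st}^*A_\I\bar\vx_{st}=0$, and verify this from the skew-symmetry of $A_\I$. The only differences are cosmetic --- you name the Fredholm alternative explicitly and remark on the non-uniqueness of $\vx_\I$, neither of which appears in the paper's proof.
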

\begin{proof}  By Proposition \ref{p4.1}, $A_{st}$ is a complex Hermitian matrix and $\lambda$ is real.
Then (\ref{e7}) becomes
\begin{equation} \label{e8}
A_\I\bar \vx_{st} = (\lambda I_n - A_{st})\vx_\I.
\end{equation}

Since $\lambda$ is a simple eigenvalue of $A_{st}$, (\ref{e8}) has a solution $\vx_\I$ if and only if
$$\vx_{st}^*A_\I\bar \vx_{st} = 0.$$
But
$$\vx_{st}^* A_\I \bar \vx_{st} = \left(\vx_{st}^* A_\I \bar \vx_{st}\right)^\top = \vx_{st}^* (A_\I)^\top \bar \vx_{st} = - \vx_{st}^* A_\I \bar \vx_{st},$$
as by Proposition \ref{p4.1}, $A_\I^\top = -A_\I$.
Thus,
$$\vx_{st}^*A_\I\bar \vx_{st} = 0.$$
This implies (\ref{e8}) has a solution $\vx_\I$.   By Corollary \ref{c3.3}, $\lambda$ is a right eigenvalue of $A$.
\end{proof}

We now consider right eigenvectors of a dual complex Hermitian matrix.

\begin{Prop} \label{p4.0}
Suppose that $A \in {\mathbb {DC}}^{n \times n}$ is a Hermitian matrix, and has a right eigenvalue $\lambda$, with right eigenvectors
$\vx^{(1)}, \cdots, \vx^{(k)}$.   Then $\vy = \sum_{j=1}^k \vx^{(j)}\alpha_j$ is a right eigenvector of $A$, associated with $\lambda$, as long as $\vy$ is appreciable, where $\alpha_1, \cdots, \alpha_k \in \mathbb {DC}$.
\end{Prop}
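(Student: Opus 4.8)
The plan is to reduce everything to the linearity of matrix–vector multiplication over $\mathbb{DC}$, using crucially that $\lambda$ is a real scalar. First I would invoke Proposition \ref{p4.2}: since $A$ is Hermitian and $\lambda$ is a right eigenvalue of $A$, the scalar $\lambda$ is a real number. The purpose of this step is that a real number lies in the center of $\mathbb{DC}$ — it commutes with $\ii$, with $\epsilon\jj$ and with $\epsilon\kk$, hence with every dual complex number, and in particular with each $\alpha_j$.

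Next I would simply compute. Writing $\vy = \sum_{j=1}^k \vx^{(j)}\alpha_j$ and using that left multiplication by the fixed matrix $A$ distributes over sums and pulls out right scalar factors (the same associativity observation made right after (\ref{e2})),
\[
A\vy = A\left(\sum_{j=1}^k \vx^{(j)}\alpha_j\right) = \sum_{j=1}^k \left(A\vx^{(j)}\right)\alpha_j = \sum_{j=1}^k \vx^{(j)}\lambda\alpha_j,
\]
where the last equality uses $A\vx^{(j)} = \vx^{(j)}\lambda$ for each $j$. Because $\lambda$ is real and therefore central, $\lambda\alpha_j = \alpha_j\lambda$, so
\[
A\vy = \sum_{j=1}^k \vx^{(j)}\alpha_j\lambda = \left(\sum_{j=1}^k \vx^{(j)}\alpha_j\right)\lambda = \vy\lambda.
\]
Finally I would appeal to the definition in Section 3: $A\vy = \vy\lambda$ together with the hypothesis that $\vy$ is appreciable is exactly the statement that $\vy$ is a right eigenvector of $A$ associated with $\lambda$.

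I do not expect any genuine obstacle here; the only place where care is needed is the commutation step $\lambda\alpha_j = \alpha_j\lambda$, which fails for a general dual complex $\lambda$ but is valid precisely because Proposition \ref{p4.2} forces $\lambda$ to be real. It is worth recording explicitly that this is why the Hermitian hypothesis is doing work: for a matrix with a genuinely dual complex right eigenvalue, scaling an eigenvector on the right by an appreciable $q$ replaces $\lambda$ by the similar value $q^{-1}\lambda q$ (as noted after (\ref{e2})), so a $\mathbb{DC}$-linear combination of eigenvectors need not be an eigenvector for the same $\lambda$. In the Hermitian/real case this subtlety disappears and the short argument above goes through verbatim.
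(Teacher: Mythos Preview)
Your proof is correct and is essentially identical to the paper's own proof: the paper performs the same chain $A\vy = \sum_j A\vx^{(j)}\alpha_j = \sum_j \vx^{(j)}\lambda\alpha_j = \sum_j \vx^{(j)}\alpha_j\lambda = \vy\lambda$ and justifies the commutation $\lambda\alpha_j = \alpha_j\lambda$ by invoking Proposition~\ref{p4.2} to get that $\lambda$ is real. The only cosmetic difference is that the paper states the commutation justification after the display rather than before.
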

\begin{proof}
We have
$$A\vy = A \sum_{j=1}^k \vx^{(j)}\alpha_j = \sum_{j=1}^k A \vx^{(j)}\alpha_j = \sum_{j=1}^k \vx^{(j)}\lambda \alpha_j = \sum_{j=1}^k \vx^{(j)} \alpha_j\lambda = \vy\lambda.$$
Hence, $\vy$ is a right eigenvector of $A$, associated with $\lambda$, as long as $\vy$ is appreciable.   Note that by Proposition \ref{p4.2}, $\lambda$ is a real number.  Thus, we have
$\lambda \alpha_j = \alpha_j\lambda$.   
\end{proof}

Suppose that $A \in {\mathbb {DC}}^{n \times n}$ is a Hermitian matrix and has a real right eigenvalue $\lambda$.   Denote the set of right eigenvectors of $A$, associated with $\lambda$ by $V_A(\lambda)$.

The right eigenvectors of a dual complex Hermitian matrix have properties similar to eigenvectors of a complex Hermitian matrix.

\begin{Prop} \label{p4.5}
Two right eigenvectors of a Hermitian matrix $A \in {\mathbb {DC}}^{n \times n}$, associated with two distinct right eigenvalues, are orthogonal to each other.
\end{Prop}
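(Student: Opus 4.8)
The plan is to mimic the classical complex-Hermitian argument: if $A\vx = \vx\lambda$ and $A\vy = \vy\mu$ with $\lambda \neq \mu$ both right eigenvalues, then by Proposition \ref{p4.2} both $\lambda$ and $\mu$ are real numbers. First I would compute $\vy^* A \vx$ in two ways. On one hand, $\vy^* A \vx = \vy^*(\vx\lambda) = (\vy^*\vx)\lambda$, where I use that $\lambda$ is a scalar (here real) so the association is clean. On the other hand, using $A^* = A$, we have $\vy^* A \vx = (A\vy)^* \vx = (\vy\mu)^* \vx = \bar\mu\, \vy^*\vx = \mu\, \vy^*\vx$ since $\mu$ is real. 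Setting these equal gives $(\vy^*\vx)\lambda = \mu(\vy^*\vx)$, i.e. $(\vy^*\vx)(\lambda - \mu) = 0$ after again invoking that $\lambda,\mu$ are real and hence commute with the dual complex scalar $\vy^*\vx$.

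The next step is to conclude $\vy^*\vx = 0$ from $(\vy^*\vx)(\lambda-\mu) = 0$ with $\lambda - \mu$ a nonzero real number. Here I must be careful: in $\mathbb{DC}$ there are zero divisors, so "product is zero" does not generally force a factor to vanish. However, $\lambda - \mu$ is a nonzero \emph{real} number, which is an appreciable dual complex number, hence invertible with inverse $(\lambda-\mu)^{-1} = \overline{(\lambda-\mu)}/|\lambda-\mu|^2 = 1/(\lambda-\mu)$. Multiplying $(\vy^*\vx)(\lambda-\mu) = 0$ on the right by this inverse yields $\vy^*\vx = 0$, which is exactly the statement that $\vx$ and $\vy$ are orthogonal.

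I expect the main obstacle to be nothing more than keeping the bookkeeping of where scalars sit (left versus right multiplication) honest, since dual complex multiplication is noncommutative; the realness of $\lambda$ and $\mu$, guaranteed by Proposition \ref{p4.2}, is what makes all the commutations legitimate and is the only nontrivial input. A minor point worth a sentence: one should note $\vy^*\vx \in \mathbb{DC}$ is a single dual complex number, and the cancellation of the nonzero real factor $\lambda - \mu$ is valid precisely because nonzero reals are appreciable and therefore not zero divisors. With these observations the proof is a two-line computation, and I would present it in essentially that form.

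\begin{proof}
Let $\lambda, \mu$ be distinct right eigenvalues of $A$, with right eigenvectors $\vx$ and $\vy$ respectively, so that $A\vx = \vx\lambda$ and $A\vy = \vy\mu$. By Proposition \ref{p4.2}, both $\lambda$ and $\mu$ are real numbers, and in particular $\lambda - \mu$ is a nonzero real number. Using $A^* = A$ we compute
$$\mu(\vy^*\vx) = \bar\mu(\vy^*\vx) = (\vy\mu)^*\vx = (A\vy)^*\vx = \vy^*A\vx = \vy^*(\vx\lambda) = (\vy^*\vx)\lambda = \lambda(\vy^*\vx),$$
where in the last equality we used that $\lambda$ is real and hence commutes with the dual complex number $\vy^*\vx$. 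Therefore $(\vy^*\vx)(\lambda - \mu) = 0$. Since $\lambda - \mu$ is a nonzero real number, it is appreciable and thus invertible in $\mathbb{DC}$; multiplying on the right by $(\lambda-\mu)^{-1}$ gives $\vy^*\vx = 0$. Hence $\vx$ and $\vy$ are orthogonal.
\end{proof}
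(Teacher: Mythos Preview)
Your proof is correct and follows essentially the same route as the paper's: invoke Proposition \ref{p4.2} to get that $\lambda,\mu$ are real, compute $\vy^*A\vx$ (the paper uses $\vx^*A\vy$) two ways, and cancel the nonzero real factor $\lambda-\mu$. Your explicit remark that $\lambda-\mu$ is appreciable and hence invertible is a welcome bit of extra care that the paper leaves implicit.
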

\begin{proof}
Suppose $\vx$ and $\vy$ are two right eigenvectors of a Hermitian matrix $A \in {\mathbb {DC}}^{n \times n}$, associated with two distinct right eigenvalues $\lambda$ and $\mu$, respectively.   By Proposition \ref{p4.2}, $\lambda$ and $\mu$ are real numbers.   Then $\lambda \not = \mu$.
We have
$$\lambda(\vx^*\vy) = (\vx\lambda)^*\vy = (A\vx)^*\vy = \vx^*A\vy = \vx^*\vy\mu = \mu \vx^*\vy.$$
Since $\lambda \not = \mu$, we have $\vx^*\vy = 0$.
\end{proof}

Suppose that  $A \in {\mathbb {DC}}^{n \times n}$ is a Hermitian matrix, and $\lambda$ is an eigenvalue of
$A_{st}$ with multiplicity $p$.   Then $\lambda$ is a real number.   If $\lambda$ is also a right eigenvalue of $A$, and there are right eigenvectors $\{ \vu^{(1)}, \cdots, \vu^{(p)} \}$ of $A$ associated
with $\lambda$ such that $\{ \vu^{(1)}, \cdots, \vu^{(p)} \}$ forms an orthonormal basis of $V_A(\lambda)$,
then we say that
$\lambda$ is a {\bf regular} eigenvalue of $A_{st}$.    By Theorem \ref{t4.3}, any single multiple eigenvalue of $A_{st}$ is regular.   If all the eigenvalues of $A_{st}$ are regular, then we say that the Hermitian matrix $A$ is {\bf regular}, i.e., $A$ has $n$ right eigenvalues, counting with multiplicity.
Otherwise, we say that $A$ is irregular.

\begin{Thm} \label{t4.6}
Suppose that $A \in {\mathbb {DC}}^{n \times n}$ is a regular Hermitian matrix.  Then there are unitary matrix $U \in {\mathbb {DC}}^{n \times n}$ and real diagonal matrix $D \in {\mathbb {R}}^{n \times n}$ such that $D = U^{-1}AU$.  The $n$ diagonal entries of $D$ are right eigenvalues of $A$, and the column vectors of $U$ are corresponding right eigenvectors.
\end{Thm}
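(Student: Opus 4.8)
The plan is to assemble $U$ column by column from the right eigenvectors supplied by regularity, to verify that $U$ is unitary and in fact invertible with $U^{-1}=U^*$, and then to read off $D=U^{-1}AU$ directly. To set up: by Proposition~\ref{p4.1}, $A_{st}$ is a complex Hermitian matrix, so its eigenvalues are real; let $\lambda_1,\dots,\lambda_s$ be the distinct ones, with multiplicities $p_1,\dots,p_s$ summing to $n$. Since $A$ is regular, every $\lambda_i$ is a regular eigenvalue of $A_{st}$, hence a right eigenvalue of $A$ admitting an orthonormal basis $\{\vu^{(i,1)},\dots,\vu^{(i,p_i)}\}$ of $V_A(\lambda_i)$ consisting of right eigenvectors of $A$. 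Within each block these are orthonormal by construction, and for $i\neq i'$ the distinct real numbers $\lambda_i,\lambda_{i'}$ force $\vu^{(i,j)}$ and $\vu^{(i',j')}$ to be orthogonal by Proposition~\ref{p4.5}; so, relabeling all $n$ of them as $\vu_1,\dots,\vu_n$, we get $\vu_k^*\vu_l=\delta_{kl}$, i.e.\ an orthonormal basis of ${\mathbb {DC}}^{n \times 1}$. Set $U=[\vu_1,\dots,\vu_n]$, so that $U^*U=I$ and $U$ is unitary.

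The delicate point --- and, I expect, the only genuinely non-routine step --- is that over ${\mathbb {DC}}$ the identity $U^*U=I$ does not by itself produce a two-sided inverse (because of noncommutativity and zero divisors), so the expression $U^{-1}AU$ needs to be justified. I would pass to standard parts: $\mathrm{st}(U^*U)=(U_{st})^*U_{st}=I$, so $U_{st}$ is a complex unitary matrix and in particular invertible. Writing $U=U_{st}+U_\I\epsilon\jj$ and seeking $X=X_{st}+X_\I\epsilon\jj$ with $UX=I$, a short computation using Lemma~\ref{l3.1} to commute $\jj$ past the complex matrix $X_{st}$ gives $UX=U_{st}X_{st}+(U_{st}X_\I+U_\I\overline{X_{st}})\epsilon\jj$, so the choice $X_{st}=U_{st}^{-1}$, $X_\I=-U_{st}^{-1}U_\I\overline{U_{st}^{-1}}$ yields $UX=I$. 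Then $U^*=U^*(UX)=(U^*U)X=X$, whence $UU^*=I$ as well; thus $U$ is invertible with $U^{-1}=U^*$. (If a lemma asserting that a dual complex matrix is invertible if and only if its standard part is invertible is available, this step shortens accordingly.)

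Finally I would diagonalize. Let $\mu_k$ be the eigenvalue (one of the $\lambda_i$) associated with the column $\vu_k$, and put $D=\mathrm{diag}(\mu_1,\dots,\mu_n)\in{\mathbb R}^{n \times n}$, a real diagonal matrix. For each $k$, the $k$-th column of $AU$ is $A\vu_k=\vu_k\mu_k$, which is exactly the $k$-th column of $UD$; hence $AU=UD$, and left-multiplying by $U^{-1}$ gives $D=U^{-1}AU$. By construction the diagonal entries of $D$ are right eigenvalues of $A$ and the columns of $U$ are corresponding right eigenvectors, which is the assertion.
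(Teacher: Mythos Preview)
Your proof is correct and follows the same outline as the paper: collect the orthonormal right eigenvectors supplied by regularity (using Proposition~\ref{p4.5} for cross-block orthogonality), form $U$, and read off $AU=UD$. The paper's own argument is a one-line appeal to ``the above discussion'' and does not spell out the invertibility of $U$; your explicit verification that $U^*U=I$ upgrades to a two-sided inverse via the invertibility of $U_{st}$ is a genuine detail the paper skips, and it is handled correctly.
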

\begin{proof}
By above discussion, ${\mathbb {DC}}^{n \times 1}$ has an orthonormal basis such that the basis vectors are right eigenvectors of $A$.   Let them be the column vectors of $U$.  Then we have the desired result.
\end{proof}

\section{Right Subeigenvalues and Subeigenvectors of Hermitian Matrices}

Suppose that $A \in {\mathbb {DC}}^{n \times n}$ is a Hermitian matrix.   If $A$ is irregular, then $A$ has no $n$ right eigenvalues, and $A$ cannot be diagonalized.  We also know that the right eigenvalues of $A$ must be eigenvalues of $A_{st}$, but not vice versa.   Where are those ``missing'' eigenvalues of $A_{st}$?   In this section, we will find out those ``missing'' eigenvalues of $A_{st}$ and their roles in the spectral theory of dual complex Hermitian matrices.   We call them right subeigenvalues of $A$.

Suppose that $A \in {\mathbb {DC}}^{n \times n}$.  Assume that there is $\lambda \in {\mathbb {DC}}$,
$\mu \in {\mathbb {C}}$, $\mu \not = 0$, and $\vx, \vy \in {\mathbb {DC}}^{n \times 1}$ such that
\begin{equation} \label{e9}
A\vx = \vx\lambda + \vy\mu \epsilon \jj,
\end{equation}
and
\begin{equation} \label{e10}
A\vy = \vy\lambda - \vx\mu \epsilon \jj,
\end{equation}
where $\vx$ and $\vy$ are appreciable, and orthogonal to each other.
Then we say that $\lambda$ is a right subeigenvalue of $A$, with multiplicity $2$, $\vx$ and $\vy$ are right subeigenvectors of $A$, associated with $\lambda$, and $\mu$ is the adjoint parameter of $A$, associated with $\lambda$.

Here, we require $\mu$ to be a complex number, as if $\mu$ is a dual complex number, its infinitesimal part does not play any role here.     We have the following proposition parallel to Proposition \ref{p4.2}.

\begin{Prop} \label{p5.1}
A right subeigenvalue $\lambda$ of a Hermitian matrix $A = A_{st} + A_\I\epsilon\jj \in {\mathbb {DC}}^{n \times n}$ must be a real number and is a multiple eigenvalue of the complex Hermitian matrix $A_{st}$.

A right subeigenvalue of a positive semi-definite Hermitian matrix $A \in {\mathbb {DC}}^{n \times n}$ must be a nonnegative number.   A right subeigenvalue of a positive definite Hermitian matrix $A \in {\mathbb {DC}}^{n \times n}$ must be a positive number.
\end{Prop}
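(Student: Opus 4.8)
The plan is to follow the template of Propositions~\ref{p4.1} and~\ref{p4.2}: first extract reality of $\lambda$ from a quadratic form, then read off the multiplicity from the standard parts of (\ref{e9}) and (\ref{e10}). First I would premultiply (\ref{e9}) by $\vx^*$. Since $\vx$ and $\vy$ are orthogonal, $\vx^*\vy = 0$, so
$$\vx^* A \vx = \vx^*\vx\,\lambda = \|\vx\|^2\lambda.$$
As $A$ is Hermitian, the left-hand side is a real number, and as $\vx$ is appreciable, $\|\vx\|^2 = \vx^*\vx$ is a positive real number; hence $\lambda$ is real. The same identity settles the definiteness claims at once: if $A$ is positive semi-definite then $\vx^* A\vx \ge 0$, forcing $\lambda \ge 0$; if $A$ is positive definite then, $\vx$ being appreciable, $\vx^* A\vx > 0$, forcing $\lambda > 0$.

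Knowing now that $\lambda \in {\mathbb R}$ and $\mu \in {\mathbb C}$, I would take the standard part of both (\ref{e9}) and (\ref{e10}). Taking standard parts is multiplicative on dual complex matrices --- in the product $(P_{st}+P_\I\epsilon\jj)(Q_{st}+Q_\I\epsilon\jj)$ every cross term carries a factor $\epsilon$ and the $\epsilon\jj\cdot\epsilon\jj$ term vanishes, exactly as in the computation in the proof of Theorem~\ref{t3.2} --- while $\vy\mu\epsilon\jj$ and $\vx\mu\epsilon\jj$ are purely infinitesimal. Hence the standard parts of (\ref{e9}) and (\ref{e10}) become
$$A_{st}\vx_{st} = \lambda\vx_{st}, \qquad A_{st}\vy_{st} = \lambda\vy_{st}.$$
Because $\vx$ and $\vy$ are appreciable, $\vx_{st} \ne \0$ and $\vy_{st} \ne \0$, so both are eigenvectors of $A_{st}$ for the eigenvalue $\lambda$; and by Proposition~\ref{p4.1}, $A_{st}$ is a complex Hermitian matrix. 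Finally, taking the standard part of the relation $\vx^*\vy = 0$ gives $\vx_{st}^*\vy_{st} = 0$, so the nonzero vectors $\vx_{st}$ and $\vy_{st}$, being orthogonal, are linearly independent. Thus $\dim\ker(\lambda I_n - A_{st}) \ge 2$, and since $A_{st}$ is Hermitian this geometric multiplicity equals the algebraic multiplicity; therefore $\lambda$ is a multiple eigenvalue of $A_{st}$.

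I do not expect a genuine obstacle. The only delicate point is the bookkeeping when splitting (\ref{e9})--(\ref{e10}) into standard and infinitesimal parts --- one should invoke Lemma~\ref{l3.1} to move $\jj$ past the complex vectors and use $\epsilon^2 = 0$, just as in the proof of Theorem~\ref{t3.2} --- but since only the standard parts of the two defining equations are needed, even this computation is short. Note also that the hypothesis $\mu \ne 0$ is not used in this proposition; it only matters for distinguishing a genuine subeigenvalue from a repeated right eigenvalue.
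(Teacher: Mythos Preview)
Your proposal is correct and follows essentially the same route as the paper's own proof: premultiply (\ref{e9}) by $\vx^*$ and use orthogonality to get $\vx^*A\vx=\|\vx\|^2\lambda$ real (and nonnegative/positive under the definiteness hypotheses), then take standard parts of (\ref{e9}), (\ref{e10}) and of $\vx^*\vy=0$ to obtain two orthogonal eigenvectors $\vx_{st},\vy_{st}$ of $A_{st}$ for $\lambda$. Your added remarks that geometric equals algebraic multiplicity for Hermitian $A_{st}$ and that $\mu\neq 0$ is not actually used here are accurate refinements of the paper's presentation.
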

\begin{proof}
Suppose that $A \in {\mathbb {DC}}^{n \times n}$ is a Hermitian matrix, $\lambda$ is a right subeigenvalue of $A$ with right eigenvectors $\vx$ and $\vy$, and the adjoint parameter $\mu$.   By (\ref{e9}) and $\vx^*\vy = 0$, we have
$$\vx^*A\vx = \vx^*\vx\lambda = \|\vx\|^2\lambda$$
is a real number.  Thus, $\lambda$ is a real number.  With $A = A_{st} + A_\I\epsilon\jj$, $\vx = \vx_{st} + \vx_\I\epsilon\jj$ and  $\vy = \vy_{st} + \vy_\I\epsilon\jj$, from (\ref{e9}) and (\ref{e10}), we have
$$A_{st}\vx_{st} = \vx_{st}\lambda = \lambda\vx_{st}, \ \  A_{st}\vy_{st} = \vy_{st}\lambda = \lambda\vy_{st}.$$
Since $\vx$ and $\vy$ are appreciable, $\vx_{st} \not = \0$ and $\vy_{st} \not = \0$.   By $\vx^*\vy = 0$, we have $\vx^*_{st}\vy_{st} = 0$.   Thus, $\vx_{st}$ and $\vy_{st}$ are two orthogonal eigenvectors of $A_{st}$, associated with $\lambda$.  Then $\lambda$ is a multiple eigenvalue of $A_{st}$.

The other conclusions can be proved similarly.
\end{proof}

We will see that ``missing'' eigenvalues of $A_{st}$ are recovered as right subeigenvalues of $A$, if they are not right eigenvalues of $A$.   Consider the case that $\lambda$ is a double eigenvalue of $A_{st}$.   This is the case of Example 2.

\begin{Prop} \label{p5.2}
Suppose that $A = A_{st} + A_\I\epsilon\jj \in {\mathbb {DC}}^{n \times n}$ is a Hermitian matrix, and $\lambda$ is a double eigenvalue of $A_{st}$.   Let $\{ \vx_{st}, \vy_{st} \}$ be an orthonormal basis of the eigenspace of $\lambda$ with respect to $A_{st}$, and
\begin{equation} \label{e11}
\mu = \vy_{st}^*A_\I\bar \vx_{st}.
\end{equation}
Then $\lambda$ is a double right eigenvalue of $A$ if and only if $\mu = 0$.   Otherwise, $\lambda$ is a double right subeigenvalue of $A$, with $\mu$ as its adjoint parameter.  This fact is independent of the choice of the orthonormal basis $\{ \vx_{st}, \vy_{st} \}$.
\end{Prop}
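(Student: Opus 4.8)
The plan is to translate (\ref{e9})--(\ref{e10}) and the notion of a right eigenvector into ordinary complex linear algebra for the pair $(A_{st},A_\I)$, via Corollary~\ref{c3.3} and Lemma~\ref{l3.1}, exactly as in the proof of Theorem~\ref{t4.3}. Since $\lambda$ is a double eigenvalue of the complex Hermitian matrix $A_{st}$, it is real, its eigenspace is exactly $E:=\span\{\vx_{st},\vy_{st}\}$, and $A_\I^\top=-A_\I$ by Proposition~\ref{p4.1}. I shall use throughout the scalar identities $\vx_{st}^*A_\I\bar\vx_{st}=\vy_{st}^*A_\I\bar\vy_{st}=0$ (as in Theorem~\ref{t4.3}, from $A_\I^\top=-A_\I$), $\vy_{st}^*A_\I\bar\vx_{st}=\mu$ (this is (\ref{e11})), and $\vx_{st}^*A_\I\bar\vy_{st}=-\mu$ (a scalar equals its transpose, together with $A_\I^\top=-A_\I$), as well as the fact that every dual complex vector $\vz$ satisfies $\vz^*\vz=\|\vz_{st}\|^2$ because $|q|$ depends only on the standard part of $q$.

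\textbf{The eigenvalue alternative.} By Corollary~\ref{c3.3}, a vector $\vz_{st}\in E\setminus\{\0\}$ is the standard part of a right eigenvector of $A$ for $\lambda$ if and only if the complex system $(\lambda I_n-A_{st})\vz_\I=A_\I\bar\vz_{st}$ is solvable, i.e.\ (since $A_{st}$ is Hermitian) if and only if $A_\I\bar\vz_{st}\perp E$. Writing $\vz_{st}=\alpha\vx_{st}+\beta\vy_{st}$ and pairing $A_\I\bar\vz_{st}$ with $\vx_{st},\vy_{st}$ yields $-\bar\beta\mu$ and $\bar\alpha\mu$. Hence: if $\mu\neq0$, then $\alpha=\beta=0$ is forced, so $\lambda$ is not a right eigenvalue of $A$ at all; if $\mu=0$, the condition holds for every $\vz_{st}\in E$, so both $\vx_{st}$ and $\vy_{st}$ extend. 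In the latter case, choose solutions $\vx_\I,\vy_\I$ of $(\lambda I_n-A_{st})\vx_\I=A_\I\bar\vx_{st}$ and $(\lambda I_n-A_{st})\vy_\I=A_\I\bar\vy_{st}$; these are unique modulo $E=\ker(\lambda I_n-A_{st})$, and with $\vx=\vx_{st}+\vx_\I\epsilon\jj$, $\vy=\vy_{st}+\vy_\I\epsilon\jj$ one computes $\vx^*\vy=(\vx_{st}^*\vy_\I-\vy_{st}^*\vx_\I)\epsilon\jj$, which can be made $0$ by an appropriate choice of the $E$-components of $\vx_\I,\vy_\I$. The resulting $\vx,\vy$ are then orthonormal right eigenvectors of $A$ for $\lambda$, so $\lambda$ is a double right eigenvalue of $A$. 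This settles the first assertion.

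\textbf{The subeigenvalue case.} Suppose $\mu\neq0$. Expanding (\ref{e9})--(\ref{e10}) with $\vx=\vx_{st}+\vx_\I\epsilon\jj$, $\vy=\vy_{st}+\vy_\I\epsilon\jj$ and using Lemma~\ref{l3.1}, $\overline{\epsilon\jj}=-\epsilon\jj$, $\epsilon^2=0$, and $\epsilon\jj\mu\epsilon\jj=0$, the standard parts reduce to the true equations $A_{st}\vx_{st}=\lambda\vx_{st}$, $A_{st}\vy_{st}=\lambda\vy_{st}$, while the $\epsilon\jj$-parts become
$$(\lambda I_n-A_{st})\vx_\I=A_\I\bar\vx_{st}-\mu\vy_{st}, \qquad (\lambda I_n-A_{st})\vy_\I=A_\I\bar\vy_{st}+\mu\vx_{st}.$$
Pairing each right-hand side with $\vx_{st}$ and $\vy_{st}$ and using the four scalar identities above and orthonormality of $\{\vx_{st},\vy_{st}\}$, all four pairings vanish, so both systems are solvable; choosing solutions and once more adjusting their $E$-components so that $\vx_{st}^*\vy_\I-\vy_{st}^*\vx_\I=0$ yields appreciable, mutually orthogonal $\vx,\vy$ satisfying (\ref{e9})--(\ref{e10}). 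Hence $\lambda$ is a double right subeigenvalue of $A$ with adjoint parameter $\mu$.

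\textbf{Basis-independence, and the main obstacle.} If $\{\vx_{st}',\vy_{st}'\}$ is another orthonormal eigenbasis of $E$, then $(\vx_{st}',\vy_{st}')=(\vx_{st},\vy_{st})Q$ for a unitary $Q\in\mathbb C^{2\times2}$; substituting into $\mu'=(\vy_{st}')^*A_\I\overline{\vx_{st}'}$ and using the four scalar identities, the diagonal terms drop out and the cross terms give $\mu'=\overline{\det Q}\,\mu$, so $|\mu'|=|\mu|$ and in particular $\mu'=0\iff\mu=0$; thus which alternative occurs does not depend on the chosen basis. The step I expect to be the real obstacle is not any single computation but the bookkeeping needed to reach the displayed $\epsilon\jj$-equations and the formula for $\vx^*\vy$: one must repeatedly invoke $\jj\va=\bar\va\jj$, $\overline{\epsilon\jj}=-\epsilon\jj$, $\epsilon^2=0$, and the fact that neither $(\cdot)^\top$ nor $\overline{(\cdot)}$ distributes over products of dual complex matrices, so a conjugate or a sign is easily misplaced; the one genuinely new (and short) ingredient beyond Theorem~\ref{t4.3} is that the $E$-components of $\vx_\I,\vy_\I$ are free and can be chosen to enforce $\vx^*\vy=0$.
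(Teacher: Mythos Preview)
Your proof is correct and follows essentially the same route as the paper: reduce (\ref{e9})--(\ref{e10}) to the complex linear systems $(\lambda I_n-A_{st})\vx_\I=A_\I\bar\vx_{st}-\mu\vy_{st}$ and $(\lambda I_n-A_{st})\vy_\I=A_\I\bar\vy_{st}+\mu\vx_{st}$, check solvability by pairing with the eigenbasis $\{\vx_{st},\vy_{st}\}$ using $A_\I^\top=-A_\I$, and handle basis-independence via a unitary change $Q$. You are in fact more thorough than the paper on three points it only sketches---the ``only if'' direction (that $\mu\neq0$ rules out any right eigenvector with standard part in $E$), the adjustment of the free $E$-components of $\vx_\I,\vy_\I$ to enforce $\vx^*\vy=0$, and the explicit formula $\mu'=\overline{\det Q}\,\mu$.
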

\begin{proof}  Since $A_{st}\vx_{st} = \lambda\vx_{st}$, by some algebraic derivation, we see that (\ref{e9}) is equivalent to
\begin{equation} \label{e12}
(\lambda I_n - A_{st})\vx_\I = A_\I \bar \vx_{st} - \mu \vy_{st}.
\end{equation}
Similarly, (\ref{e10}) is equivalent to
\begin{equation} \label{e13}
(\lambda I_n - A_{st})\vy_\I = A_\I \bar \vy_{st} + \mu \vx_{st}.
\end{equation}
By matrix theory, (\ref{e12}) and (\ref{e13}) have solutions $\vx_\I$ and $\vy_\I$ if and only if
$$\vx_{st}^*(A_\I \bar \vx_{st} - \mu \vy_{st}) = 0,\ \ \vy_{st}^*(A_\I \bar \vx_{st} - \mu \vy_{st}) = 0,$$
and
$$\vx_{st}^*(A_\I \bar \vy_{st} + \mu \vx_{st}) = 0,\ \ \vy_{st}^*(A_\I \bar \vy_{st} + \mu \vx_{st}) = 0.$$
We may prove these by using (\ref{e11}), the orthogonality between $\vx_{st}$ and $\vy_{st}$, and the skew-symmetry of $A_\I$.  Then, by (\ref{e9}) and (\ref{e10}), $\lambda$ is a double right eigenvalue or subeigenvalue of $A$, depending upon $\mu=0$ or not.   Finally, we may prove the fact that $\mu = 0$ or not is independent from the choice of the orthonormal basis $\{ \vx_{st}, \vy_{st} \}$, by choosing another orthonormal basis to show this, via the skew-symmetry of $A_\I$.
\end{proof}

When the multiplicity of an eigenvalue of $A_{st}$ is higher than $2$, the situation is more complicated. Viewing Theorem \ref{t4.6}, Proposition \ref{p5.1}, and Theorem 3.1 of \cite{Gu21}, we have the following theorem.   In the proof of this theorem, the symbols $I_i$ and $I$ are different from the rest of this paper.

\begin{Thm} \label{t5.2}
Suppose that $A \in {\mathbb {DC}}^{n \times n}$ is a Hermitian matrix.  Then there are unitary matrix $U \in {\mathbb {DC}}^{n \times n}$ and block-diagonal matrix $\Sigma \in {\mathbb {DC}}^{n \times n}$ such that $\Sigma = U^{-1}AU$, and each block of $\Sigma$ is either of the form:
\begin{itemize}

\item $(\lambda_i)$,

\item $\begin{bmatrix} \lambda_i & \mu_i\epsilon \jj\\ -\mu_i\epsilon \jj& \lambda_i \end{bmatrix}$,
\end{itemize}
where each $\lambda_i$ is real, $\mu_i$ is complex, and $\mu_i \not = 0$.
\end{Thm}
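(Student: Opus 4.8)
The plan is to bring $A$ to the stated form in three unitary stages: first make the standard part diagonal, then kill the parts of the infinitesimal part that couple distinct eigenspaces of the standard part, and finally treat each eigenspace separately. Two algebraic facts do the work. The first is Lemma~\ref{l3.1}: it lets us move $\jj$ past a complex matrix at the price of a conjugation, so that for any complex unitary $Q$ one has $Q^*(A_\I\epsilon\jj)Q=(Q^*A_\I\overline Q)\epsilon\jj$, and the congruence $A_\I\mapsto Q^*A_\I\overline Q$ preserves complex skew-symmetry (Proposition~\ref{p4.1}). The second is a description of which $U=U_{st}+U_\I\epsilon\jj$ are unitary; the only instance we need is $U_{st}=I$ with $U_\I$ complex symmetric, which is unitary because $U^*=I-U_\I\epsilon\jj$ and the cross term in $U^*U$ vanishes by $\epsilon^2=0$.

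\emph{Steps 1 and 2.} Since $A$ is Hermitian, $A_{st}$ is complex Hermitian (Proposition~\ref{p4.1}), so a complex unitary $Q$ gives $Q^*A_{st}Q=\Lambda$, real diagonal with equal entries grouped as $\nu_1 I_{m_1},\nu_2 I_{m_2},\dots$ for the distinct eigenvalues $\nu_k$ of $A_{st}$; then $Q^*AQ=\Lambda+(Q^*A_\I\overline Q)\epsilon\jj$ is again Hermitian, so we may assume $A=\Lambda+A_\I\epsilon\jj$. Now take $U=I+W\epsilon\jj$ with $W$ complex symmetric. A short computation gives $U^*AU=\Lambda+(A_\I+\Lambda W-W\Lambda)\epsilon\jj$, so in the block structure induced by $\Lambda$ the $(k,l)$ block of the new infinitesimal part is $A_\I^{(k,l)}+(\nu_k-\nu_l)W^{(k,l)}$. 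Choosing $W^{(k,l)}=-(\nu_k-\nu_l)^{-1}A_\I^{(k,l)}$ for $k\ne l$, $W^{(l,k)}=(W^{(k,l)})^\top$, and the diagonal blocks of $W$ zero produces a symmetric $W$; skew-symmetry of $A_\I$ makes this choice consistent and keeps the new infinitesimal part skew-symmetric. Hence we may assume $A=\Lambda+A_\I\epsilon\jj$ with $A_\I=\mathrm{diag}(M_1,M_2,\dots)$ block-diagonal, each $M_k$ an $m_k\times m_k$ complex skew-symmetric matrix.

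\emph{Step 3.} It remains to handle one block $\nu_k I_{m_k}+M_k\epsilon\jj$ at a time, by a unitary acting only on that eigenspace (such a transformation is block-diagonal, so it leaves the other blocks, and $\Lambda$, untouched). Since $A_{st}$ there is the scalar $\nu_k I_{m_k}$, every dual complex unitary $U$ fixes it, and one finds $U^*(\nu_k I+M_k\epsilon\jj)U=\nu_k I+(U_{st}^*M_k\overline{U_{st}})\epsilon\jj$ — the infinitesimal part of $U$ drops out. Thus the remaining freedom is exactly a unitary congruence $M_k\mapsto P^\top M_k P$ with $P$ an arbitrary complex unitary. By the classical normal form for complex skew-symmetric matrices under unitary congruence — the complex analogue of the real skew-symmetric normal form underlying Theorem~3.1 of \cite{Gu21}, a special case of Youla's theorem, provable by induction by splitting off a $2\times2$ skew block associated with a largest singular value — there is a complex unitary $P$ with $P^\top M_kP$ block-diagonal with blocks $\left(\begin{smallmatrix}0&\mu\\-\mu&0\end{smallmatrix}\right)$, $\mu>0$, and $1\times1$ zero blocks. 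Substituting back turns $\nu_k I_{m_k}+M_k\epsilon\jj$ into the blocks $(\nu_k)$ and $\left[\begin{smallmatrix}\nu_k&\mu\epsilon\jj\\-\mu\epsilon\jj&\nu_k\end{smallmatrix}\right]$ of the theorem (indeed with $\mu>0$), and composing the three unitaries — a product of unitaries being unitary — yields $U$ and $\Sigma=U^{-1}AU=U^*AU$.

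The routine part is the bookkeeping with $\jj$ and $\epsilon$, which Lemma~\ref{l3.1} and the unitarity check dispatch uniformly, together with the elementary linear algebra of Steps 1 and 2. The one substantive ingredient, and the step I expect to be the crux, is the congruence normal form for complex skew-symmetric matrices in Step 3: this is precisely where the $2\times2$ blocks arise, and hence where the failure of diagonalizability of irregular Hermitian matrices (cf. Example~2 and Proposition~\ref{p5.2}) is accounted for.
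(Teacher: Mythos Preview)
Your proof is correct and follows essentially the same three-stage approach as the paper: diagonalize $A_{st}$ by a complex unitary, then apply a unitary of the form $I+W\epsilon\jj$ (this is exactly the paper's matrix $P$) to eliminate the off-diagonal infinitesimal blocks, and finally invoke the Hua--Youla normal form for complex skew-symmetric matrices under unitary congruence on each diagonal block. Your write-up is more explicit about the $\jj$-bookkeeping and the unitarity check for $I+W\epsilon\jj$, but the underlying argument coincides with the paper's.
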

\begin{proof}   Let $A \in {\mathbb {DC}}^{n \times n}$ be a Hermitian matrix.  Denote $A = A_{st} + A_\I \epsilon \jj$, where $A_{st}, A_\I \in {\mathbb {C}}^{n \times n}$.  Then $A_{st}$ is Hermitain, and $A_\I$ is skew-symmetric.   This implies that there is a complex unitary matrix $S \in {\mathbb {C}}^{n \times n}$ and a real diagonal matrix $D \in {\mathbb {R}}^{n \times n}$ such that $D = SA_{st}S^*$.
Suppose that $D = {\rm diag}(\lambda_1I_1, \lambda_2I_2, \cdots, \lambda_rI_r)$, where $\lambda_1 > \lambda_2 > \cdots > \lambda_r$, and $I_1, \cdots, I_r$ are identity matrices such that the sum of their dimensions is $n$.  Let $M = SAS^*$.  Then
\begin{eqnarray*}
&& M \\ & = & D + SA_\I \epsilon \jj S^* = D + SA_\I S^\top \epsilon \jj\\
& = & \begin{bmatrix}
\lambda_1I_1 + \epsilon C_{11}\jj & \epsilon C_{12}\jj & \cdots & \epsilon C_{1r}\jj \\
-\epsilon C_{12}^\top\jj & \lambda_2I_2 + \epsilon C_{22}\jj & \cdots  & \epsilon C_{2r}\jj \\
\vdots & \vdots & \ddots & \vdots \\
-\epsilon C_{1r}^\top\jj & -\epsilon C_{2r}^\top\jj &  \cdots & \lambda_r I_r + \epsilon C_{rr}\jj
\end{bmatrix},
\end{eqnarray*}
where each $C_{ij}$ is a complex matrix of adequate dimensions, each $C_{ii}$ is skew-symmetric.

Let
$$P = \begin{bmatrix}
I_1 & {\epsilon C_{12}\jj \over \lambda_1-\lambda_2} & \cdots & {\epsilon C_{1r}\jj \over \lambda_1-\lambda_r}\\
{\epsilon C_{12}^\top\jj \over \lambda_1-\lambda_2} & I_2 & \cdots  & {\epsilon C_{2r}\jj \over \lambda_2-\lambda_r} \\
\vdots & \vdots & \ddots & \vdots \\
{\epsilon C_{1r}^\top\jj \over \lambda_1-\lambda_r} & {\epsilon C_{2r}^\top\jj \over \lambda_2-\lambda_r}  &  \cdots & I_r
\end{bmatrix},$$
and $N=PMP^* \equiv (PS)A(PS)^*$.   Then
$$P^* = \begin{bmatrix}
I_1 & -{\epsilon C_{12}\jj \over \lambda_1-\lambda_2} & \cdots & -{\epsilon C_{1r}\jj \over \lambda_1-\lambda_r}\\
-{\epsilon C_{12}^\top\jj \over \lambda_1-\lambda_2} & I_2 & \cdots  & -{\epsilon C_{2r}\jj \over \lambda_2-\lambda_r} \\
\vdots & \vdots & \ddots & \vdots \\
-{\epsilon C_{1r}^\top\jj \over \lambda_1-\lambda_r} & -{\epsilon C_{2r}^\top\jj \over \lambda_2-\lambda_r}  &  \cdots & I_r
\end{bmatrix},$$
$PP^* = P^*P = I$, where $I$ is the $n \times n$ identity matrix.   Thus, $P$ and $PS$ are unitary matrices.  Also, $N$ is a block diagonal matrix, $N= {\rm diag}(\lambda_1I_1+\epsilon C_{11}\jj, \lambda_2I_2 +\epsilon C_{22}\jj, \cdots, \lambda_rI_r+\epsilon C_{rr}\jj)$.   We then may use the spectral theorem for skew-symmetric matrices \cite{Hu44, Yo61} to diagonalize $N$ further to the desired form.
\end{proof}

With Theorem \ref{t4.6}, Proposition \ref{p5.1} and Theorem \ref{t5.2}, we have the following theorem.

\begin{Thm} \label{t5.3}
Suppose that $A \in {\mathbb {DC}}^{n \times n}$ is Hermitian.  Then $A$ has exactly $n$ right eigenvalues and subeigenvalues, which are all real numbers.  There are also $n$ right eigenvectors and subeigenvectors, associated with these $n$ right eigenvalues and subeigenvalues, such that they form an orthonormal basis of ${\mathbb {DC}}^{n \times 1}$.   The Hermitian matrix $A$ is positive semi-definite or definite if and only all of these right eigenvalues and subeigenvalues are nonnegative or positive, respectively.
\end{Thm}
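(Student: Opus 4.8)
The plan is to read off everything from the block-diagonalization already established in Theorem~\ref{t5.2}. Write $\Sigma = U^{-1}AU$ with $U \in {\mathbb{DC}}^{n\times n}$ unitary and $\Sigma$ block-diagonal, each block being $(\lambda_i)$ or $\begin{bmatrix} \lambda_i & \mu_i\epsilon\jj \\ -\mu_i\epsilon\jj & \lambda_i \end{bmatrix}$ with $\lambda_i$ real, $\mu_i$ complex and $\mu_i \neq 0$. Since $U$ is invertible, all its columns are appreciable by Proposition~\ref{p2.0}, and since $U$ is unitary they form an orthonormal basis of ${\mathbb{DC}}^{n\times 1}$. Now rewrite $\Sigma = U^{-1}AU$ as $AU = U\Sigma$ and compare columns. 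A $1\times 1$ block $(\lambda_i)$ in position $k$ yields $A\vu^{(k)} = \vu^{(k)}\lambda_i$, so $\lambda_i$ is a right eigenvalue of $A$ with right eigenvector $\vu^{(k)}$. A $2\times 2$ block in positions $k, k+1$ yields $A\vu^{(k)} = \vu^{(k)}\lambda_i - \vu^{(k+1)}\mu_i\epsilon\jj$ and $A\vu^{(k+1)} = \vu^{(k+1)}\lambda_i + \vu^{(k)}\mu_i\epsilon\jj$, which is exactly the pair (\ref{e9})--(\ref{e10}) upon setting $\vx = \vu^{(k+1)}$, $\vy = \vu^{(k)}$; hence $\lambda_i$ is a right subeigenvalue of $A$ with adjoint parameter $\mu_i$ and right subeigenvectors $\vu^{(k)}, \vu^{(k+1)}$, which are orthogonal because columns of $U$ are orthonormal. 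Since the block sizes sum to $n$, a right eigenvalue counts once and a right subeigenvalue has multiplicity $2$, this exhibits exactly $n$ right eigenvalues and subeigenvalues, together with $n$ associated right eigenvectors and subeigenvectors forming an orthonormal basis; each $\lambda_i$ is real by construction (or, alternatively, by Propositions~\ref{p4.2} and \ref{p5.1}).

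For the word ``exactly'' I would add the remark that, by Propositions~\ref{p4.2} and \ref{p5.1}, any right eigenvalue (resp.\ right subeigenvalue) of $A$ is an eigenvalue (resp.\ multiple eigenvalue) of $A_{st}$, while ${\rm st}(\Sigma) = U_{st}^{-1}A_{st}U_{st}$ shows that the multiset $\{\lambda_i\}$, listed with the multiplicities dictated by the block sizes, is precisely the spectrum of $A_{st}$ counted with multiplicity, of total size $n$. So no further right eigenvalue or subeigenvalue can appear beyond those found above, and the decomposition of the $n$ eigenvalues of $A_{st}$ into right eigenvalues and right subeigenvalues of $A$ is forced to have total count $n$.

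Finally, for the definiteness statements I would substitute $\vx = U\vy$. Because $U$ is unitary, $U_{st}$ is a complex unitary matrix, so $\vx \mapsto \vy = U^*\vx$ is a bijection of ${\mathbb{DC}}^{n\times 1}$ that carries appreciable vectors to appreciable vectors, and $\vx^*A\vx = \vy^*\Sigma\vy$. As $\Sigma$ is Hermitian and block-diagonal, $\vy^*\Sigma\vy$ splits as a sum over the blocks: a $1\times 1$ block contributes $\lambda_i|y_k|^2$, and a $2\times 2$ block contributes a genuine real number whose standard part is $\lambda_i(|{\rm st}(y_k)|^2 + |{\rm st}(y_{k+1})|^2)$, the $\mu_i\epsilon\jj$ off-diagonal terms being infinitesimal and hence absent, since $\vx^*A\vx$ is a genuine real number and therefore equals its own standard part. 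Testing $\vy$ supported on a single block shows $A$ is positive semi-definite iff every $\lambda_i \ge 0$ and positive definite iff every $\lambda_i > 0$; conversely, if all $\lambda_i \ge 0$ (resp.\ $> 0$) the block sum is $\ge 0$ (resp.\ $> 0$ whenever $\vy$, hence some ${\rm st}(y_k)$, is nonzero). Since the $\lambda_i$ are exactly the right eigenvalues and subeigenvalues, this gives the claim. The bulk of the theorem is inherited from Theorem~\ref{t5.2} and the rest is bookkeeping; the only mildly delicate point is this last one, namely handling the infinitesimal entries $\mu_i\epsilon\jj$ inside a $2\times 2$ block when forming $\vy^*\Sigma\vy$, which is resolved by the observation that $\vx^*A\vx$ is always a genuine real number.
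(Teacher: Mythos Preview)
Your proof is correct and follows essentially the same route as the paper's: invoke the block-diagonalization of Theorem~\ref{t5.2}, read off $AU = U\Sigma$ column by column to identify the $1\times 1$ blocks as right eigenvalues and the $2\times 2$ blocks as right subeigenvalues, and appeal to Propositions~\ref{p4.2} and \ref{p5.1} for realness and the necessity direction of the definiteness claims. Your write-up is in fact more detailed than the paper's own proof---in particular you spell out the sufficiency direction of the positive (semi-)definiteness characterization via $\vx^*A\vx = \vy^*\Sigma\vy$, which the paper leaves implicit.
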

\begin{proof}  In the decomposition $\Sigma = U^{-1}AU$ in Theorem \ref{t5.2}, we see that $\lambda_i$ in the block $(\lambda_i)$ is a right eigenvalue of $A$, while $\lambda_i$ in the block
$$\begin{bmatrix} \lambda_i & \mu_i\epsilon \jj\\ -\mu_i\epsilon \jj& \lambda_i \end{bmatrix}$$
is a right subeigenvalue of $A$, with multiplicity $2$, and the columns of $U$ are corresponding right eigenvectors and subeigenvectors.    The other conclusions follow from Propositions \ref{p4.2} and \ref{p5.1}.
\end{proof}

The following corollary complements Theorem \ref{t4.3} and Proposition \ref{p5.2}.

\begin{Cor} \label{c5.5}
Suppose that $A  = A_{st} + A_\I\epsilon\jj \in {\mathbb {DC}}^{n \times n}$ is Hermitian, and $\lambda$ is an eigenvalue of $A_{st}$ with multiplicity $p$.   Then there is an integer $k$ satisfying $0 \le 2k \le p$, such that $\lambda$ is a right eigenvalue of $A$, with multiplicity $p-2k$, if $p-2k > 0$, and
a right subeigenvalue of $A$, with multiplicity $2k$, if $k > 0$.
\end{Cor}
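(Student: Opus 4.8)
The plan is to read everything off the block-diagonal normal form $\Sigma = U^{-1}AU$ provided by Theorem \ref{t5.2}. First I would pass to standard parts. Since $U$ is invertible, taking standard parts in $U^{-1}U = I$ gives $(U^{-1})_{st}U_{st} = I$, so $U_{st}$ is an invertible complex matrix and $(U^{-1})_{st} = (U_{st})^{-1}$; and because the standard part of a product of dual complex matrices is the product of their standard parts (all cross terms carry a factor $\epsilon\jj$, and the remaining term vanishes since $(\epsilon\jj)^2 = 0$, using Lemma \ref{l3.1} to move $\jj$ past complex entries), we get $\Sigma_{st} = U_{st}^{-1}A_{st}U_{st}$. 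Hence $A_{st}$ is similar to the complex matrix $\Sigma_{st}$, which is \emph{diagonal}: the block $(\lambda_i)$ contributes the diagonal entry $\lambda_i$, and the block $\begin{bmatrix}\lambda_i & \mu_i\epsilon\jj\\ -\mu_i\epsilon\jj & \lambda_i\end{bmatrix}$ contributes the pair $\lambda_i,\lambda_i$. In particular the multiplicity $p$ of $\lambda$ as an eigenvalue of $A_{st}$ equals the number of diagonal entries of $\Sigma_{st}$ equal to $\lambda$. (Unitarity of $U$ is not needed for this counting, but it will be used below to get orthonormality of the associated eigenvectors.)

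Next I would count the blocks. Let $m$ be the number of $1\times 1$ blocks $(\lambda_i)$ of $\Sigma$ with $\lambda_i = \lambda$, and let $k$ be the number of $2\times 2$ blocks of $\Sigma$ with $\lambda_i = \lambda$. By the previous paragraph $p = m + 2k$, so $m = p - 2k$ and $0 \le 2k \le p$, which is exactly the asserted range for $k$. Now invoke the identification established in the proof of Theorem \ref{t5.3}: a value $\lambda_i$ sitting in a $1\times 1$ block of $\Sigma$ is a right eigenvalue of $A$ whose associated right eigenvector is the corresponding column of $U$, while a value $\lambda_i$ sitting in a $2\times 2$ block is a right subeigenvalue of $A$ of multiplicity $2$ whose two right subeigenvectors are the corresponding two columns of $U$; and since $U$ is unitary, all these columns are mutually orthonormal. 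Collecting the $m$ columns coming from $1\times 1$ blocks with value $\lambda$ shows that, when $p-2k = m > 0$, $\lambda$ is a right eigenvalue of $A$ with $m = p-2k$ orthonormal associated right eigenvectors, i.e. with multiplicity $p-2k$ (appealing also to Proposition \ref{p4.2} that there are no other right eigenvalues). Collecting the $k$ pairs of columns coming from $2\times 2$ blocks with value $\lambda$ shows that, when $k > 0$, $\lambda$ is a right subeigenvalue of $A$ of total multiplicity $2k$, by Proposition \ref{p5.1}.

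The routine points still to be pinned down are that forming standard parts commutes with matrix multiplication and with inversion — an immediate consequence of $(\epsilon\jj)^2 = 0$ and Lemma \ref{l3.1} — and that the notion of \emph{multiplicity} of a right eigenvalue is, in this setting, exactly the number of mutually orthonormal associated right eigenvectors one can exhibit among the columns of $U$, so that these counts are additive over the blocks. The one genuinely delicate point, and the main obstacle, is the bookkeeping consistency with the definitions: one must make sure it is legitimate for a single real number $\lambda$ to occur simultaneously as a right eigenvalue of $A$ (through some $1\times 1$ blocks) and as a right subeigenvalue of $A$ (through some $2\times 2$ blocks), and that the two multiplicities $p-2k$ and $2k$ add up precisely to the eigenvalue multiplicity $p$ of $A_{st}$ — this is the general form of what Proposition \ref{p5.2} verifies in the case $p=2$. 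Once this is checked, nothing further is required: the existence of the integer $k$ is forced by the normal form of Theorem \ref{t5.2}.
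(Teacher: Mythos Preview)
Your proposal is correct and follows essentially the same route the paper intends: the paper gives no explicit proof of Corollary~\ref{c5.5}, presenting it as an immediate consequence of the block-diagonal form in Theorem~\ref{t5.2} and the identification of blocks with right eigenvalues/subeigenvalues in Theorem~\ref{t5.3}. Your argument---passing to standard parts to see that $\Sigma_{st}$ is diagonal and similar to $A_{st}$, then counting the $1\times 1$ and $2\times 2$ blocks associated with $\lambda$ to obtain $p=m+2k$---is exactly the intended unpacking, and your caveat about the informal notion of ``multiplicity'' is apt, since the paper never defines it precisely.
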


\section{Singular Value Decomposition of Dual Complex Matrices}

Suppose that $A \in {\mathbb {DC}}^{m \times n}$.   Then $A^*A \in {\mathbb {DC}}^{n \times n}$ is a positive semi-definite Hermitian matrix.   By Theorem \ref{t5.3}, $A^*A$ has exactly $n$ right eigenvalues and subeigenvalues, and they are all nonnegative numbers.    Assume that $A^*A$ has $r$ positive right eigenvalues and subeigenvalues.   Then we say that the standard rank of $A$ is $r$.

We now present the singular value decomposition of dual complex matrices.   The following theorem generalizes Theorem 4.4 of \cite{Gu21} for square dual number matrices to general dual complex matrices.

\begin{Thm}
Suppose that $A \in {\mathbb {DC}}^{m \times n}$ with standard rank $r$.   Then there exist a dual complex unitary matrix $U \in {\mathbb {DC}}^{m \times m}$ and a dual complex unitary matrix $V \in {\mathbb {DC}}^{n \times n}$ such that
\begin{equation} \label{e14}
U^*AV = \begin{bmatrix} \Sigma_r & O & O \\ O & D\epsilon\jj & O \\ O & O & O \end{bmatrix},
\end{equation}
where $\Sigma_r \in {\mathbb {DC}}^{r \times r}$ is a block-diagonal matrix, and each block of $\Sigma_r$ is either of the form:
\begin{itemize}

\item $(\sigma_i)$,

\item $\begin{bmatrix} \sigma_i & \nu_i\epsilon \jj\\ -\nu_i\epsilon \jj& \sigma_i \end{bmatrix}$,

\end{itemize}
each $\sigma_i$ is real and positive, $\nu_i$ is complex, $\nu_i \not = 0$, and $D$ is a $p \times p$ positive diagonal matrix, $r+p \le l = \min \{ m, n \}$.
\end{Thm}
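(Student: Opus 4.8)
The plan is to bootstrap from the Hermitian spectral theorem (Theorems~\ref{t5.2} and~\ref{t5.3}) together with the ordinary complex SVD. First I would pass to the positive semi-definite Hermitian matrix $H:=A^*A\in{\mathbb {DC}}^{n\times n}$. By Theorem~\ref{t5.2} there is a unitary $\tilde V\in{\mathbb {DC}}^{n\times n}$ with $\tilde V^*H\tilde V$ block-diagonal of the two stated kinds of blocks; conjugating by a permutation matrix I may list the blocks with positive $\lambda_i$ first. Writing $V=[\,V_1\ V_2\,]$ with $V_1\in{\mathbb {DC}}^{n\times r}$ and $V_2\in{\mathbb {DC}}^{n\times(n-r)}$ (this split being exactly the definition of the standard rank $r$), we get $V_1^*HV_1=\hat\Sigma$, a block-diagonal matrix that is positive definite by Theorem~\ref{t5.3}, together with $V_1^*HV_2=O$ and $\big(V_2^*HV_2\big)_{st}=O$.

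Next I would build the appreciable part of the decomposition. Let $\Sigma_r:=\hat\Sigma^{1/2}$ be the positive definite Hermitian square root; because $\epsilon^2=0$ it is obtained block by block via
\[
(\lambda_i)\ \longmapsto\ (\sqrt{\lambda_i}),\qquad
\begin{bmatrix}\lambda_i & \mu_i\epsilon\jj\\ -\mu_i\epsilon\jj & \lambda_i\end{bmatrix}
\ \longmapsto\
\begin{bmatrix}\sqrt{\lambda_i} & \dfrac{\mu_i}{2\sqrt{\lambda_i}}\,\epsilon\jj\\[2mm] -\dfrac{\mu_i}{2\sqrt{\lambda_i}}\,\epsilon\jj & \sqrt{\lambda_i}\end{bmatrix},
\]
so $\Sigma_r$ has exactly the block form demanded in the theorem, with $\sigma_i=\sqrt{\lambda_i}>0$ and $\nu_i=\mu_i/(2\sqrt{\lambda_i})$ complex and nonzero. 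As $\Sigma_r$ is invertible, I set $U_1:=AV_1\Sigma_r^{-1}\in{\mathbb {DC}}^{m\times r}$; using $\Sigma_r^*=\Sigma_r$, $\Sigma_r^2=\hat\Sigma$, $V_1^*HV_1=\hat\Sigma$ and $V_1^*HV_2=O$, one checks $U_1^*U_1=I$, $U_1^*AV_1=\Sigma_r$ and $U_1^*AV_2=O$.

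It remains to produce the purely infinitesimal singular values, i.e. the block $D\epsilon\jj$. The crucial observation is that $AV_2$ has vanishing standard part: $\big(V_2^*HV_2\big)_{st}=\big(A_{st}(V_2)_{st}\big)^*\big(A_{st}(V_2)_{st}\big)=O$ forces $A_{st}(V_2)_{st}=O$, so $AV_2=C\epsilon\jj$ for some complex matrix $C\in{\mathbb C}^{m\times(n-r)}$. I would extend $U_1$ to a dual complex unitary $[\,U_1\ W\,]$ (e.g. take the columns of $W$ to be orthonormal right eigenvectors of the idempotent Hermitian matrix $I_m-U_1U_1^*$ for the eigenvalue $1$). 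Then $W^*AV_2=(W_{st}^*C)\,\epsilon\jj=:C'\epsilon\jj$; since $U_1^*AV_2=O$ forces $(U_1)_{st}^*C=O$, i.e. $\mathrm{range}(C)\subseteq\mathrm{range}((U_1)_{st})^\perp=\mathrm{range}(W_{st})$, the map $W_{st}^*$ is isometric on $\mathrm{range}(C)$ and $C'\in{\mathbb C}^{(m-r)\times(n-r)}$ has complex rank $p:=\rank(C)$. Applying the complex SVD to $C'$ gives complex unitaries $\tilde P\in{\mathbb C}^{(m-r)\times(m-r)}$ and $\tilde Q\in{\mathbb C}^{(n-r)\times(n-r)}$ with $\tilde P^*C'\tilde Q={\rm diag}(D,O)$, $D$ a $p\times p$ positive diagonal matrix. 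Finally I set $U:=[\,U_1\ W\tilde P\,]$ and $V:=[\,V_1\ V_2\overline{\tilde Q}\,]$; both are dual complex unitary, using that the conjugate of a complex unitary matrix is dual complex unitary and the relation $\jj\,\overline{M}=M\,\jj$ for complex matrices $M$ (a consequence of Lemma~\ref{l3.1}). A block computation -- $U_1^*AV_1=\Sigma_r$, $(W\tilde P)^*AV_1=\tilde P^*(W^*U_1)\Sigma_r=O$, $U_1^*AV_2\overline{\tilde Q}=O$, and $(W\tilde P)^*AV_2\overline{\tilde Q}=(\tilde P^*C'\tilde Q)\epsilon\jj={\rm diag}(D\epsilon\jj,O)$ -- then gives~(\ref{e14}), and a count of block dimensions gives $r+p\le\min\{m,n\}$.

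I expect the construction of $U$ to be the real obstacle. The block of $U$ that carries the infinitesimal singular values ``wants'' to be made of left singular vectors of the complex matrix $C$, but those are merely complex vectors, whereas the columns of $U_1$ carry a nonzero infinitesimal part, so they are not automatically orthogonal in the dual complex inner product. The device above -- first extend $U_1$ to a genuine dual complex unitary $[\,U_1\ W\,]$, then run the complex SVD on the compressed matrix $W_{st}^*C$ -- forces the complex rotation $\tilde P$ to act entirely inside the $W$-block, so orthogonality to $U_1$ is preserved automatically. Beyond that, the only care needed is the systematic use of Lemma~\ref{l3.1} to commute complex matrices past $\epsilon\jj$ (notably when pushing $\overline{\tilde Q}$ through $\epsilon\jj$ and when verifying unitarity of the conjugated factors), and repeated appeals to $\epsilon^2=0$ to kill the second-order cross terms (in $\Sigma_r^2=\hat\Sigma$, in $W^*(C\epsilon\jj)=(W_{st}^*C)\epsilon\jj$, and so on).
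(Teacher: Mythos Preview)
Your proposal is correct and follows essentially the same route as the paper: diagonalize $A^*A$ via Theorem~\ref{t5.2}/\ref{t5.3}, take the Hermitian square root on the appreciable block to get $\Sigma_r$, set $U_1=AV_1\Sigma_r^{-1}$, extend $U_1$ to a full dual complex unitary, and then run the ordinary complex SVD on the purely infinitesimal residual. The paper's own proof is terser (it simply asserts $A^*AV'_2=O$, that $U'_1$ extends to a unitary, and that ``after taking SVD decomposition of the complex matrix $G$, we have the desired result''), whereas you fill in exactly the steps the paper elides: the explicit block square root, the argument that $AV_2$ is infinitesimal via $(V_2^*HV_2)_{st}=O$, the idempotent trick to build $W$, and the conjugate $\overline{\tilde Q}$ needed to push the right complex unitary past $\epsilon\jj$ via Lemma~\ref{l3.1}. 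These are genuine refinements rather than a different strategy.
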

\begin{proof}   Suppose that positive semi-definite Hermitian matrix $A^*A$ has right eigenvalues and subeigenvalues $\sigma_1^2, \cdots, \sigma_n^2$, satisfying $\sigma_1 \ge \cdots \ge \sigma_r > 0$, $\sigma_{r+1} = \cdots = \sigma_n = 0$.   By Theorem \ref{t5.3}, there are orthogonal right eigenvectors
and subeigenvectors $\vv^{(1)}, \cdots, \vv^{(n)}$ of $A^*A$, associated with $\sigma_1^2, \cdots, \sigma_n^2$, respectively.   Write
$V'_1 = (\vv^{(1)}, \cdots, \vv^{(r)})$,  $V'_2 = (\vv^{(r+1)}, \cdots, \vv^{(n)})$, $V' = (V'_1, V'_2)$. Let $\Sigma_r$ be block-diagonal matrix described in the theorem. Then we have
$$A^*AV'_1 = V'_1\Sigma_r^2,$$
$$(V'_1)^*A^*AV'_1 = \Sigma_r^2,$$
$$A^*AV'_2 = O,$$
$$(V'_2)^*A^*AV'_2 = O.$$
Therefore, $AV'_2 = B\epsilon\jj$ for a complex matrix $B$.  Let $U'_1 = AV'_1\Sigma_r^{-1}$.
Then
$$(U'_1)^*AV'_2 = \left(\Sigma_r^{-1}\right)^*(V'_1)^*(A^*AV'_2) = \left(\Sigma_r^{-1}\right)^*(V'_1)^* O =  O.$$
Also observe that $(U'_1)^*U'_1 = I_r$.  Take $U'_2 \in {\mathbb {DC}}^{m \times (m-r)}$ such that $U' = (U'_1, U'_2)$ is a unitary matrix.  We see that
$$(U'_2)^*B\epsilon\jj = G\epsilon \jj,$$
where $G$ is an $(m-r) \times (n-r)$ complex matrix.   Then
\begin{eqnarray*}
(U')^*AV' & = & \begin{bmatrix} (U'_1)^*AV'_1 & (U'_1)^*AV'_2 \\ (U'_2)^*AV'_1 & (U'_2)^*AV'_2 \end{bmatrix}\\
& = & \begin{bmatrix} (U'_1)^*U'_1\Sigma_r & O \\ (U'_2)^*U'_1\Sigma_r &  G\epsilon\jj \end{bmatrix}\\
& = & \begin{bmatrix} \Sigma_r & O \\ O & G\epsilon\jj \end{bmatrix}.
\end{eqnarray*}
After taking SVD decomposition of the complex matrix $G$, we have the desired result.
\end{proof}

We call the decomposition (\ref{e14}) the singular value decomposition (SVD) of the matrix $A$.   Assume that $D = {\rm diag}(\sigma_{r+1}, \cdots, \sigma_{r+p})$, where  $\sigma_{r+1} \ge \cdots \ge \sigma_{r+p} > 0$, and $r+p \le l = \min \{ m, n \}$.  We call $\sigma_1, \cdots, \sigma_r$ the standard singular values of $A$, $\sigma_{r+1}, \cdots, \sigma_{r+p}$ the infinitesimal singular values of $A$, and $\sigma_{r+p+1}=  \cdots = \sigma_l = 0$
 the zero singular values of $A$, and $p$ the infinitesimal rank of $A$.



\section{Extensions to Dual Quaternion Matrices}

In this section, we briefly study extensions of our results to dual quaternion matrices.

We denote the quaternions and the dual quaternions by ${\mathbb Q}$ and $\mathbb {DQ}$, respectively.
A dual quaternion $q$ has the form
$$q = q_{st} + q_\I\epsilon,$$
where $q_{st}, q_\I \in {\mathbb Q}$ are the standard part and the infinitesimal part of $q$ respectively,
$$q_{st} = q_0+q_1\ii+q_2\jj+q_3\kk,$$
$$q_\I = q_4+q_5\ii+q_6\jj+q_7\kk,$$
$q_0, q_1, q_2, q_3, q_4, q_5, q_6$ and $q_7$ are real numbers.  The multiplication of dual quaternions is also noncommutative.

The conjugate of $q = q_{st} + q_\I\epsilon$ is
$$\bar q = \bar q_{st} - q_\I\epsilon,$$
with
$$\bar q_{st} = q_0-q_1\ii-q_2\jj-q_3\kk.$$
The magnitude of $q$ is
$$|q| = |q_{st}| = \sqrt{q_0^2+q_1^2+q_2^2+q_3^2}.$$

The collections of quaternion and dual quaternion $m \times n$ matrices are denoted by ${\mathbb Q}^{m \times n}$ and ${\mathbb {DQ}}^{m \times n}$, respectively.

A dual quaternion matrix $A= (a_{ij}) \in {\mathbb {DQ}}^{m \times n}$ can be denoted as
\begin{equation} \label{e15}
A = A_{st} + A_\I\epsilon,
\end{equation}
where $A_{st}, A_\I \in {\mathbb Q}^{m \times n}$.   The transpose of $A$ is $A^\top = (a_{ji})$. The conjugate of $A$ is $\bar A = (\bar a_{ij})$.   The conjugate transpose of $A$ is $A^* = (\bar a_{ji}) = \bar A^\top$.

With these settings, in the extensions of our results to dual quaternion matrices, $\epsilon \jj$ appeared in many places of Sections 2-6 of this paper may be changed to $\epsilon$ only.

There are two major differences between dual complex matrices and dual quaternion matrices.

The first difference is that the standard parts and the infinitesimal parts of dual complex numbers are complex numbers.  The multiplication between complex numbers is commutative.   What we need to take care in Sections 2-6 is the multiplication between $\jj$ and complex numbers.   Thus, we need to use Lemma \ref{l3.1}.  The standard parts and the infinitesimal parts of dual quaternions are quaternions.  The multiplication between quaternions is not commutative.   Thus, additional care is needed in the extensions.    For example, (\ref{e4}) only can be written as
\begin{equation} \label{e7.4}
\vx_{st}\lambda_\I = A_\I\vx_{st} + A_{st}\vx_\I - \vx_\I\lambda_{st}.
\end{equation}

The second difference is that the theory of right eigenvalues and subeigenvalues of dual complex matrices is based upon eigenvalues of complex matrices, while the theory of right eigenvalues and subeigenvalues of dual quaternion matrices needs to be based upon right eigenvalues of quaternion matrices \cite{WLZZ18, Zh97}.   An $n \times n$ complex matrix has $n$ eigenvalues, while an $n \times n$ quaternion matrix has $2n$ right eigenvalues \cite{WLZZ18, Zh97}.

These two differences make the extension of Section 3 to dual quaternion matrices needs additional care.

Coming to Sections 4 and 5 of this paper, the extensions are slightly simpler, as the right eigenvalues and subeigenvalues of Hermitian matrices are real numbers, while multiplications between a real number and a dual quaternion is commutative.

Then, for Theorem 5.3 of this paper, a spectral theorem for skew-symmetric quaternion matrices is needed.  A reference for this is \cite{SW60}.

\section{Final Remarks}

In this paper, we studied right eigenvalues, right subeigenvalues and singular value decomposition of dual complex matrices.  By introducing right subeigenvalues, we fully characterize the spectral theory of dual complex Hermitian matrices.   An $n \times n$ dual complex Hermitian matrix has exactly $n$ right eigenvalues and subeigenvalues, which are all real.
The Hermitian matrix is positive semi-definite or definite if and only if all of its right eigenvalues and subeigenvalues are nonnegative or positive, respectively.  Based upon these, we present the singular value decomposition for general dual complex matrices.   We expect that these are also true for dual quaternion matrices.  
We also think that these may be very useful in the future applications.   Further study on this subject would be fruitful and useful.

\bigskip

{\bf Acknowledgment}  We are thankful to Prof. Hong Yan for showing us dual numbers, dual quaternions and geometric algebra, and their applications in robotics and computer graphics.   We are grateful to Ran Gutin who pointed out an error and made detailed comments on Section 6 of the early version of this paper,
and to Chen Ling who read our draft carefully.



\end{document}